%%%%%%%%%%%
%
%     version of July 2025
%
\documentclass[12pt]{amsart}
\usepackage{amssymb, amsmath, amsthm, latexsym, array,euscript}
\usepackage{fullpage}
\usepackage{verbatim}
\usepackage[scr=boondoxo,scrscaled=1.05]{mathalfa}

\usepackage{xcolor}
\definecolor{darkblue}{rgb}{0,0,.5}
\definecolor{darkgreen}{rgb}{.2,0.5,.2}
\usepackage[colorlinks=true,linkcolor=darkblue,urlcolor=violet,citecolor=magenta]{hyperref}

\numberwithin{equation}{section}

%\sloppy

\setcounter{tocdepth}{1}

\newtheorem{thm}{Theorem}[section]
%[section]
\newtheorem{lm}[thm]{Lemma}%[section]
\newtheorem{cl}[thm]{Corollary}
\newtheorem{prop}[thm]{Proposition}

\theoremstyle{remark}
\newtheorem{ex}[thm]{Example}%[section]
\newtheorem{rmk}[thm]{Remark}%[section]
%%[section]

\theoremstyle{definition}
%%[section]

%%%%%%%%%%   Dima %%%%%%%%%%

%%%%   Fraktur letters  %%%%%%%

%\newcommand {\f}{{\mathfrak f}}
\newcommand {\g}{{\mathfrak g}}

\newcommand {\q}{{\mathfrak q}}

%%%%%%%%%   Lie algebras %%%%%%%%%%%%%

%%%%%%%%%  Goth Letters  %%%%%%%%%%

\newcommand {\eus}{\EuScript}

\newcommand {\gS}{{\eus S}}
\newcommand {\gZ}{{\eus Z}}

%%%%%%%%   Greek letters   %%%%%%%
\newcommand {\esi}{\varepsilon}

\newcommand {\vp}{\varphi}
\newcommand {\vth}{\vartheta}

\newcommand{\ard}{\rightsquigarrow}

%%%%%%%%%% Kot   %%%%%%%%%%%%%%

\newcommand{\gt}{\mathfrak}

\newcommand{\mW}{{\mathbb W}}

\newcommand{\GL}{{\rm GL}}

\newcommand{\Aut}{\mathsf{Aut}}
\newcommand{\Ann}{\mathrm{Ann}}
\newcommand{\ad}{\mathrm{ad}}

\newcommand{\id}{{\rm id}}
\newcommand{\ind}{{\rm ind\,}}

\newcommand{\codim}{\mathrm{codim\,}}
\newcommand{\rk}{\mathrm{rk\,}}

\newcommand{\Lie}{\mathsf{Lie\,}}

\newcommand{\gr}{\mathrm{gr\,}}

%%%%%%%%%%%% Frak letters %%%%%%%%%%%%%

%%%%%%%%   Calligraphic letters  %%%%%%%

\newcommand {\cL}{{\mathcal L}}

\newcommand {\cz}{{\mathcal Z}}

\newcommand {\gzu}{{\gZ\!\mathscr{v}\!}}

\newcommand {\trdeg}{{\mathrm{tr.deg\,}}}

\newcommand {\mK}{{\Bbbk}}
\newcommand {\bbk}{{\Bbbk}}

\newcommand {\Z}{{\mathbb Z}}

\newcommand {\BZ}{{\mathbb Z}}

\newcommand {\beq}{\begin{equation}}
\newcommand {\eeq}{\end{equation}}

\renewcommand{\le}{\leqslant}
\renewcommand{\ge}{\geqslant}

%%%%%%%%%% Changing the way equations are numbered

\newcommand{\U}{{\eus U}}

\newcommand{\bff}{\boldsymbol{f}}
\newcommand{\bh}{\boldsymbol{h}}

\newcommand{\wq}{{\widehat{\q }}}
\newcommand{\wg}{{\widehat{\g }}}

\begin{document}
\hfill {\scriptsize July 10, 2025} %%\today}
\vskip1ex

\title[Current algebras and related Poisson-commutative subalgebras]{Invariants of twisted current algebras %%, finite order automorphisms, 
\\ and related Poisson-commutative subalgebras}
\author[D.\,Panyushev]{Dmitri I. Panyushev}
\address[D.P.]%
{Higher School  of Modern Mathematics, MIPT
Moscow 115184, Russia}
\email{panyush@mccme.ru}
\author[O.\,Yakimova]{Oksana S.~Yakimova}
\address[O.Y.]{Institut f\"ur Mathematik, Friedrich-Schiller-Universit\"at Jena,  07737 Jena,
Deutschland}
\email{oksana.yakimova@uni-jena.de}
\thanks{%%The ...
The second author is funded by the DFG (German Research Foundation) --- project number 404144169.}
\keywords{loop algebra, finite order automorphisms, symmetric invariants}%%, Poisson-commutative subalgebra}
\subjclass[2010]{17B63, 17B65, 17B08, 17B20}
\begin{abstract}
Let $\q$ be a finite-dimensional Lie algebra and $\vth$  an  automorphism of order $m$ of $\q$. 
We extend $\vth$ to an automorphism of the loop algebra $\wq=\q[t,t^{-1}]$ and consider 
the fixed-point subalgebra $\wq^\vth$. 

Using the splitting  $\wq^\vth=\q[t]^\vth\oplus (t^{-1}\q[t^{-1}])^\vth$, 
we construct  subalgebras of the symmetric algebra 
$\gZ_1\subset\gS(\q[t]^\vth)$ and 
$\gZ_2\subset\gS((t^{-1}\q[t^{-1}])^\vth)$. 
We prove that
if $\q$ is reductive, then both $\gZ_1$ and $\gZ_2$ are Poisson-commutative. 
Moreover, 
 $\gZ_1$ is always a polynomial ring with infinitely many generators, which are  explicitly described. The  algebras $\gZ_1$ and $\gZ_2$ are {\it $\vth$-twisted} Poisson versions of the 
universal Gaudin subalgebra introduced 
by Ilin and  Rybnikov in 2021
 and 
 the Feigin--Frenkel centre, respectively. Unlike the untwisted case there is a significant difference between 
$\gZ_1$ and $\gZ_2$.

For the natural Lie algebras homomorphism $\psi\!: \wq^\vth\to \q[t,t^{-1}]^\vth/(t^m-1) \cong\q$,
it is shown that $\psi(\gZ_1)$ and $\psi(\gZ_2)$
are  closely related to the Poisson-commutative subalgebras
of $\gS(\q)$ 
constructed  
by Panyushev--Yakimova in 2021. 
An explicit description of $\psi(\gZ_2)$ is obtained under some assumptions on $\vth$, 
while $\psi(\gZ_1)$ is described for 
all $\vth$ and all reductive $\q$.
\end{abstract}
\maketitle

%%%\tableofcontents
%%%%%%%%%%%   Introduction  %%%%%%
\section*{Introduction}
\label{sect:intro}

\noindent
The ground field $\mK$ is algebraically closed and $\mathsf{char}(\mK)=0$. 
%In this.
In this paper, $\q=\Lie Q$ is a finite-dimensional  Lie algebra and $\g=\Lie G$ is a reductive Lie algebra with
$Q$ and $G$ being connected affine algebraic groups. 
The dual space $\q^*$ is a Poisson variety, i.e., the 
algebra of polynomial functions on $\q^*$, $\bbk[\q^*]\cong \gS(\q)$, is a %% 
%bracket $\{\,\,,\,\}$. 
Poisson algebra. 
%The  algebra. 
%A. 
Poisson-commutative subalgebras of $\bbk[\q^*]$ are  important tools for the study of geometry of
the coadjoint action of $Q$ and representation theory of $\q$.

If $\wq$ is an infinite-dimensional Lie algebra, then $\gS(\wq)$ is also a Poisson algebra.  Each element of 
$\gS(\wq)$ is a polynomial function on $\wq^*$. 
Our main object of interest is the   loop algebra 
$\wq=\q[t,t^{-1}]=\q\otimes_{\bbk}\bbk[t,t^{-1}]$ associated with $\q$. 
We construct and study Poisson-commutative subalgebras of $\gS(\q[t,t^{-1}])$. 

Consider the vector space decomposition $\q[t,t^{-1}]=\q[t^{-1}]\oplus t\q[t]$. 
Both summands are Lie subalgebras, $\q[t^{-1}]$ is a current algebra. 
Identify $t\q[t]$ with the quotient space $\q[t,t^{-1}]/\q[t^{-1}]$, obtaining therefore  a natural action of 
$\q[t^{-1}]$ on $t\q[t]$. 
Here $\q t^k$ acts as zero on $\q t^s$, if $k+s\le 0$. 
Let 
$\gZ(\wq)=\gZ(\wq,t)=\gS(t\gt q[t])^{\gt q[t^{-1}]}$ be the subalgebra of $\q[t^{-1}]$-invariants in 
$\gS(t\gt q[t])$. 
%%
%%%
%%% 
An element  $Y\in\gS(t\gt q[t])$ belongs to $\gZ(\wq)$ if and only if
$\{\q t^{-k}, Y\}$ lies in the ideal $t\q[t] S(\wq)$ for all $k\ge 0$. 
By \cite[Prop.\,4.1]{mult},    %%% with $t$ and $t^{-1}$ 
$\gZ(\wq)$ is Poisson-commutative. 
In \cite{mult}, we have also considered  the algebra  $\gZ(\wq,[0]):=\gS(\gt q[t])^{\gt q[t^{-1}]}\subset\gS(\q[t])$.
 In order to define it, we regard $\gS(\gt q[t])$  as the 
quotient of the commutative algebra $\gS(\gt q[t,t^{-1}])$ by the ideal $(t^{-1}\gt q[t^{-1}])$. 
In this article,  we show that 
$\gZ(\wq,[0])$ is  Poisson-commutative as well, see Corollary~\ref{ohne}.  

Let $\q=\bigoplus_{i\in \BZ_m}\q_i$ be a $(\BZ/m\BZ)$-grading of $\q$ and $\zeta=\sqrt[m]1$ a fixed 
 primitive root of unity. Then the linear map $\vth\!:\q\to\q$ such that 
$\vth|_{\q_i}=\zeta^i{\cdot}\id$ is a finite  order  automorphism of $\q$. 
In \cite{fo}, we have constructed a Poisson-commutative subalgebra $\gZ(\q,\vth)\subset\gS(\q)$ associated with 
$\vth$. Here we consider analogues of $\gZ(\q,\vth)$ in the setting of loop algebras.  

For $k\in \mathbb Z$, let $\bar k\in\{0,1,\ldots,m{-}1\}$ be the 
residue of $k$ modulo $m$. Set 
$$
\wq^{\vth}:=\bigoplus_{k\in \BZ} \q_{\bar k}t^{-k}\subset\wq.
$$ 
Then $\wq^\vth$ is a $\BZ$-graded Lie algebra and it has negative and positive parts similarly to $\wq=\q[t,t^{-1}]$. 
Following the case of the trivial $\vth$, we split $\wq^\vth$ 
as $$
\wq^\vth=(t^{-1}\q[t^{-1}] \cap\wq^\vth)\oplus \q_{\bar 0} \oplus (t\q[t]\cap\wq^\vth)
$$
 and consider invariants of 
$\wq_+^\vth:=\wq^\vth\cap\q[t]$ or of $\q[t^{-1}]^\vth:=\wq^\vth\cap\q[t^{-1}]$.  
%%part 
Set 
$\gZ(\wq^\vth,[0])=\gS(\wq_+^\vth)^{\q [t^{-1}]^\vth}$ and  
$\gZ(\wq^\vth,t^{-1})=\gS(\wq^\vth/\wq_+^\vth)^{\wq_+^\vth}$.
Under mild assumptions on $\q$ and $\vth$, the algebra $\gZ(\wq^\vth,[0])$ is Poisson-commutative, see Theorem~\ref{twist-q}. Some other results on these two algebras, also for $\vth=\id$, have been proven before, see e.g. \cite{OY,fo,mult}.   
If $\q=\g$ is reductive, then we can say a lot more. 

Let $\wg^{\sf KM}$ be the (untwisted) {\it affine Kac--Moody algebra} associated with $\g$. Recall that 
$\wg^{\sf KM}=\g[t,t^{-1}]\oplus\bbk K \oplus \bbk C$, where $K$ is a central element, $[C,\xi t^k]=k\xi t^k$ for 
$\xi\in\g$, and 
$$
[\xi t^k,\eta t^s]=[\xi,\eta]t^{k+s}+(\xi,\eta) \delta_{k,-s} kK
$$
for some invariant scalar product $(\,\,,\,)$ on $\g$, see \cite{kac}. Note that the current algebra $\g[t]$ is a subalgebra 
of $\wg^{\sf KM}$. The construction of $\wg^{\sf KM}$ works for any reductive $\gt g$, but for our purposes the standard case of a simple $\gt g$ is sufficient.  
For subalgebras of $\gS(\g[t,t^{-1}])$, e.g. $\gS(\gt g[t])^{\gt g[t^{-1}]}$, we keep the above notation
with $\wq$ replaced by $\wg$.

In the reductive case, $\gZ(\wg)=\gr\!(\gt z(\wg))$, 
where $\gt z(\wg)=\gt z(\wg,t)$ is a large commutative subalgebra, the  {\it Feigin--Frenkel centre}, of the enveloping algebra 
$\U(\gt g[t])$. Historically, the FF-centre (abbreviation for Feigin--Frenkel)  was constructed as a %%c
 subalgebra  $\gt z(\wg,t^{-1})$ of $\U(t^{-1}\gt g[t^{-1}])$ \cite{ff}. 
 A description of $\gr\!(\gt z(\wg,t^{-1}))$ is obtained in \cite{ff}. 
 %% , the .
 It is convenient to %%switch 
 %%%
consider both algebras, $\gt z(\wg,t)$ and  $\gt z(\wg,t^{-1})$. They are isomorphic and 
an isomorphism is given by the change of variable $t^{-1}\mapsto t$. 
The use of $\gt z(\wg,t)$ makes 
 further 
changes of variables easier. %%% are possible, 
For instance, we may replace $t$ with 
$\esi t +1$, where $\esi\in\bbk$. Since $\esi$ is a scalar, 
$\xi \esi^a t^k \eta \esi^b t^s= \esi^{a+b}\xi  t^k \eta  t^s$ in $\U(\wg)$ and $\gS(\wg)$ for all 
$a,b,k,s\in \Z$ and $\xi,\eta\in\gt g$. Therefore the limit algebras 
$$
{\gzu}=\lim_{\esi\to 0} \gZ(\wg,\esi t+1)\subset \gS(\gt g[t])^{\gt g}
\ \ \text{ and } \ \ 
\widetilde{\gzu}=\lim_{\esi\to 0} \gt z(\wg,\esi t+1)\subset \U(\gt g[t])^{\gt g}
$$
are well-defined. 
In \cite{mult}, it is shown that ${\gzu}=\gZ(\wg,[0])$ and $\widetilde{\gzu}=\gt z(\wg,[0])$,
where  $\gt z(\wg,[0])$ is
 the {\it universal Gaudin subalgebra}
%%%%  a slight   
 constructed in  \cite[Sect.~5]{ir}. By 
\cite{ir}, $\gt z(\wg,[0])$
is the unique quantisation of a subalgebra $A_{\gt g}\subset\gS(\gt g[t])$ described in Section~4 loc.~cit. 
It is noticed in \cite[Sect.\,5]{mult} that $A_g$ and $\gZ(\wg,[0])$ have the same generators, i.e., 
they coincide.  
All algebras $\gZ(\wg)$, $\gt z(\wg)$, $\widetilde{\gzu}$, $\gzu$  are polynomial rings in infinitely many variables,
see Section~\ref{sec-ohne} for references.  

Let $p\in\bbk[t]$ be a monic polynomial of degree $n\ge 1$ such that $p(0)\ne 0$. Suppose that  the roots of 
$p$ are distinct. Then $\g\otimes_{\bbk}(\bbk[t]/(p))=:\g[t,t^{-1}]/(p)$ is isomorphic to $\g^{\oplus n}$.  An interesting observation is that 
the images of $\gt z(\wg,t^{-1})$ and $\gt z(\wg,[0])$ in $\U(\g[t,t^{-1}]/(p))$ coincide. 
This fact is mentioned in \cite[Sect.\,5]{ir} and explained in \cite[Sect.\,6.1]{mult}. 
The image in question is a Gaudin subalgebra of $\U(\g^{\oplus n})$ constructed in \cite{FFRe}. 

Let $\vth\in\Aut(\g)$ be an automorphism of finite order $m$ and $\g=\bigoplus_{i\in \BZ_m}\g_i$ the corresponding 
$(\BZ/m\BZ)$-grading with $\vth|_{\g_i}=\zeta^i{\cdot}\id$. If $(\,\,,\,)$ is $\vth$-invariant, then 
we can extend $\vth$  to an automorphism $\Theta$ of $\wg^{\sf KM}$ by setting $\Theta(C)=C$ and 
$\Theta(\xi t^k)=\zeta^k \vth(\xi) t^k$ for $\xi\in\g$, $k\in\BZ$. Then 
$(\wg^{\sf KM})^\Theta=\wg^\vth\oplus\bbk K\oplus\bbk C$ as a vector space for  $\wg^\vth=\bigoplus_{k\in \BZ} \g_{\bar k}t^{-k}$. Suppose that $\g$ is simple (non-Abelian).
If $\vth$ is an inner automorphism, then $(\wg^{\sf KM})^\Theta\cong\wg^{\sf KM}$. If $\vth$ is outer, then 
$(\wg^{\sf KM})^\Theta$ is isomorphic to a twisted affine Kac--Moody algebra. Both these statements can be found in  \cite{kac}. 
In spite of an isomorphism between $\g[t,t^{-1}]$ and the ``$\vth$-twisted" loop algebra 
$\wg^\vth$ associated to an inner $\vth$, the algebras $\gZ(\wg^\vth,t^{-1})$ and $\gZ(\wg^\vth,[0])$ differ from 
$\gZ(\wg,t^{-1})$ and $\gZ(\wg,[0])$, see Remark~\ref{Rcomp}. 

We have studied $\gZ(\wg^\vth,t^{-1})$ in \cite[Sect.\,8]{fo}. 
In the notation of that paper, our  $\gZ(\wg^\vth,t^{-1})$ is $\gZ(\widehat{\gt h}_-,\vth^{-1})$. 
If $\ind\g_{(0)}=\rk\g$, see Section~\ref{setup} for definitions, then   $\{\gZ(\wg^\vth,t^{-1}),\gZ(\wg^\vth,t^{-1})\}=0$ by  \cite[Thm.\,8.2{\sf (i)}]{fo}. Under some stronger assumptions on $\g$ and $\vth$, the algebra 
$\gZ(\wg^\vth,t^{-1})$ is a polynomial ring. However, there are automorphisms $\vth$ such that these assumptions are not satisfied and in some cases   $\gZ(\wg^\vth,t^{-1})$ is not a polynomial ring, see Remark~\ref{comp}.
%%% 
In Section~\ref{sec-twist}, we prove that $\gZ(\wg^\vth,[0])$ is always Poisson-commutative and that it has an algebraically independent infinite set of generators, see  Corollary~\ref{cl-red} and Theorem~\ref{twist-inf}. 
This shows that unlike the case of the trivial $\vth$, there is a major difference between  
$\gZ(\wg^\vth,t^{-1})$ and $\gZ(\wg^\vth,[0])$. 

In Section~\ref{Q}, we consider images of  $\gZ(\wg^\vth,t)$  and $\gZ(\wg^\vth,[0])$ in 
$\gS((\wg^\vth/(t^m-1))\cong\gS(\g)$. We show that the image of  $\gZ(\wg^\vth,[0])$ is generated by 
$\gS(\g_0)^{\g_0}$ and  a certain Poisson-commutative subalgebra 
 $\gZ_\times(\g,\vth^{-1})$ constructed in \cite{fo}, see also \eqref{z-x}. 
Under several assumptions on $\vth$, the image of $\gZ(\wg^\vth,t)$  is equal to 
$\gZ_\times(\g,\vth)$. Note that $\gZ_\times(\g,\vth)$ is a `large' Poisson-commutative subalgebra. 
If $\ind\g_{(0)}=\rk\g$, then $\trdeg\gZ_\times(\g,\vth)$ takes the maximal possible value 
$(\dim\g+\rk\g)/2$ \cite{fo}. 

An interesting open problem is whether there are commutative subalgebras in $\U(\q[t])$
with a non-reductive $\q$ that {\it quantise} $\gZ(\wq)$, $\gZ(\wq,[0])$ 
 and in $\U(\g[t])$ that 
{\it quantise}   $\gZ(\wg^\vth,t)$, $\gZ(\wg^\vth,[0])$ with a non-trivial $\vth$.   
%% ... 

\section{Setup and notation}\label{setup}

Let $\cL$ be a Lie algebra, finite- or infinite-dimensional. The symmetric algebra $\gS(\cL)$ is the associated graded of the enveloping 
algebra $\U(\cL)$. Hence it has a Poisson structure $\{\,\,,\,\}$ inherited from the commutator in $\U(\cL)$. 
Here $\{x,y\}=[x,y]$ for $x,y\in\cL$. 
If ${\mathcal A}\subset\U(\cL)$ is a commutative subalgebra, then its graded image 
$A=\gr\!({\mathcal A})\subset\gS(\cL)$ is Poisson-commutative, i.e.,  $\{A,A\}=0$. 
Passing to the commutative object $\gS(\cL)$ allows us to use geometric methods in order to study $\U(\cL)$ and its subalgebras. 
The {\it quantisation problem} %%goes  
starts from a Poisson-commutative 
$A\subset\gS(\cL)$ and asks whether there is a commutative  ${\mathcal A}\subset\U(\cL)$ such that 
$A=\gr\!({\mathcal A})$.  Commutative subalgebras obtained in this way find  applications 
in representation theory of $\cL$, see e.g. \cite{FFR,cris}. 

For elements or subsets $Y_i$  of $\gS(\cL)$ with $1\le i\le k$, let 
$\mathsf{alg}\langle Y_1,\ldots,Y_k\rangle$ be the subalgebra of $\gS(\cL)$ generated 
by $Y_1,\ldots,Y_k$ or by the elements of $Y_1\cup\ldots\cup Y_k$, if $Y_i$ are subsets. 

Recall that $\gt q=\Lie Q$ is a finite-dimensional Lie algebra. 
Let $\q^\xi=\{x\in\q\mid \ad^*(x){\cdot}\xi=0\}$ be the {\it stabiliser\/} in $\q$ of $\xi\in\q^*$. The 
{\it index of\/} $\q$, $\ind\q$, is defined by $\ind\q:=\min_{\xi\in\q^*} \dim \q^\xi$.
The set of {\it regular\/} elements of $\q^*$ is 
\beq       \label{eq:regul-set}
    \q^*_{\sf reg}=\{\eta\in\q^*\mid \dim \q^\eta=\ind\q\}\,.
\eeq
It is a dense open subset of $\q^*$.
Set $\q^*_{\sf sing}=\q^*\setminus \q^*_{\sf reg}$.
We say that $\q$ has the {\sl codim}--$n$ property if $\codim \q^*_{\sf sing}\ge n$. 

The algebra $\gS(\q)^{\q}$ of {\it symmetric invariants of} $\q$ is defined by 
\[
\gS(\q)^{\q}=\{F\in\gS(\q) \mid \{F,\xi\}=0 \ \forall \xi\in\q\}. 
\]
This is the 
{\it Poisson centre of} $(\gS(\q),\{\,\,,\,\})$.

\subsection{Periodic gradings of Lie algebras and related compatible brackets }      
\label{subs:periodic}
Let $\vartheta\in\Aut(\q)$ be a Lie algebra automorphism of finite order $m\ge 1$ and $\zeta=\sqrt[m]1$ 
a primitive root of unity. If $\q_i$ is the $\zeta^i$-eigenspace of $\vartheta$, $i\in \BZ_m$, then the direct sum
$\q=\bigoplus_{i\in \BZ_m}\q_i$ is a {\it periodic grading\/} or $\BZ_m$-{\it grading\/} of 
$\q$. The latter means that $[\q_i,\q_j]\subset \q_{i+j}$ for all $i,j\in \BZ_m$. Here $\q_0=\q^\vartheta$ 
is the fixed-point subalgebra for $\vartheta$ and each $\q_i$ is a $\q_0$-module.
If $\zeta$ is fixed, then we have a bijection between the $\BZ_m$-gradings of $\q$ and the automorphisms 
$\vth\in\Aut(\q)$ such that $\vth^m=\id$. 

We choose $\{0,1,\dots, m-1\}\subset\BZ$ as a fixed set of representatives for $\BZ_m=\BZ/m\BZ$. 
Under this convention, we have
$\q=\q_0\oplus\q_1\oplus\ldots\oplus\q_{m-1}$ and
\beq   \label{eq:Z_m}
[\q_i,\q_j]\subset \begin{cases}  \q_{i+j}, &\text{ if } \ i+j\le m{-}1, \\
 \q_{i+j-m}, &\text{ if } \ i+j\ge m. \end{cases}
\eeq
According to \cite[Sect.\,2]{fo}, there are Lie algebra contractions  $\q_{(0)}=\q_{(0,\vth)}=(\q,[\,\,,\,]_{0})$ and $\q_{(\infty)}=\q_{(\infty,\vth)}=(\q,[\,\,,\,]_\infty)$  such that 
\begin{equation} \label{0andinf}
   [\q_i,\q_j]_0=\begin{cases} [\q_i,\q_j] \  & \text{ if } \ i+j<m \\
       0 & \text{ otherwise}   \end{cases}, \  \ 
    [\q_i,\q_j]_{\infty}=\begin{cases}  [\q_i,\q_j]  \  & \text{ if } \ i+j\ge m  \\
      0 & \text{ otherwise}   \end{cases}.
\end{equation}
In particular, $\q_{(\infty)}$ is nilpotent and the subspace $\q_0$, which is the 
highest grade component of $\q_{(\infty)}$, belongs to the centre of $\q_{(\infty)}$. 
Note that $\q_0$ is a subalgebra of $\q_{(0)}$ isomorphic to $\q^\vth$. 
Furthermore, $\q$, $\q_{(0)}$, and $\q_{(\infty)}$ are isomorphic as $\q^\vth$-modules. 

It is easy to see that $[\,\,,\,]=[\,\,,\,]_0+[\,\,,\,]_\infty$. In other words, the brackets 
$[\,\,,\,]$ and $[\,\,,\,]_0$, as well as the corresponding Poisson brackets $\{\,\,,\,\}$ and $\{\,\,,\,\}_0$, 
are {\it compatible}, i.e.,
any linear combination $a\{\,\,,\,\}+b\{\,\,,\,\}_0$ with $a,b\in\bbk$ is
a Poisson bracket, 
 see \cite[Sect.\,2]{fo} for more details.  Following
 \cite[Sect.\,2.2]{fo}, we associate a Poisson-commutative 
subalgebra  $\gZ=\gZ(\q,\vth)\subset\gS(\q)$ to $\vth$.  
Various properties of these subalgebras are studied in \cite{OY,fo,MZ23,some}. 
If $\vth$ is trivial, then  $[\,\,,\,]=[\,\,,\,]_0$ and $\gZ=\gS(\q)^\q$. 

Set $\cz_0:=\gS(\q_{(0)})^{\q_{(0)}}$ and $\cz_\infty=\gS(\q_{(\infty)})^{\q_{(\infty)}}$.

\subsection{Contractions and invariants}
\label{subs:contr-&-inv} 
We refer to \cite[Ch.\,7,\,\S\,2]{t41} for basic facts on contractions of Lie algebras.
In this article, we consider contractions of the following form. Let $\bbk^{\!^\times}=\bbk\setminus\{0\}$ be 
the multiplicative group of $\bbk$ and 
%let $\vp_s:\q\to \q$, In other words,
$\vp: \bbk^{\!^\times}\to \GL(\q)$, $s\mapsto \vp_s$, a polynomial representation. That is, 
%group 
the matrix entries of $\vp_{s}:\q\to \q$ are polynomials in $s$ w.r.t. some (any) basis of $\q$.
Define a new Lie algebra structure on the vector space $\q$ and associated Lie--Poisson bracket by 
\beq       \label{eq:fi_s}
      [x, y]_{(s)}=\{x,y\}_{(s)}:=\vp_s^{-1}[\vp_s( x), \vp_s( y)], \ x,y \in \q, \ s\in\bbk^{\!^\times}.
\eeq
%%% The corresponding and 
All the algebras $(\q,[\,\,,\,]_{(s)})$  
are isomorphic and  $(\q,[\,\,,\,]_{(1)})$ is the initial Lie algebra $\q$. The induced $\bbk^{\!^\times}\!$-action on the variety of structure constants is not necessarily 
polynomial, i.e., \ $\lim_{s\to 0}[x, y]_{(s)}$ may not exist for all $x,y\in\q$. Whenever all the limits exist, 
we obtain a new linear Poisson bracket, denoted $\{\,\,,\,\}_0$, and thereby a new Lie algebra $\q_{(0)}$, 
which is said to be a {\it contraction\/} of $\q$. If we wish to stress that this construction is determined 
by $\vp$, then we write $\{x, y\}_{(\vp,s)}$ for the bracket in~\eqref{eq:fi_s} and say that $\q_{(0)}=\q_{(0,\vp)}$ is the 
$\vp$-{\it contraction\/} of $\q$ or is the {\it zero limit of $\q$ w.r.t.}~$\vp$.  
A criterion for the existence of $\q_{(0)}$
can be given in terms of Lie brackets of the $\vp$-eigenspaces in $\q$, see~\cite[Sect.\,4]{Y-imrn}. 
We identify all algebras $\q_{(s)}$ and 
$\q_{(0)}$ as vector spaces. The semi-continuity of index implies that $\ind\q_{(0)}\ge \ind\q$.

%There is  
The map $\vp_s$, $s\in\bbk^{\!^\times}$, is naturally extended to an invertible transformation of 
$\gS^j(\q)$, which we also denote by $\vp_s$. The resulting graded map 
$\vp_s\!:\gS(\q)\to\gS(\q)$ is nothing but the comorphism associated with $s\in\bbk^{\!^\times}$ and
the dual representation
$\vp^*\!:\bbk^{\!^\times}\to \GL(\q^*)$.
%%% Since 
Any $F\in\gS^j(\q)$  
can be written as a sum of $\vp_s$-eigenvectors, i.e., 
$F=\sum_{i\ge 0}F_i$, %we have 
where %%%the sum 
$\vp_s(F_i)=s^iF_i\in\gS^j(\q)$. Let $F^\bullet$ denote the non-zero component $F_i$ with maximal $i$.

\begin{prop}[{\cite[Lemma~3.3]{contr}}]     \label{prop:bullet}
If $F\in\gS(\q)^{\q}$ and $\q_{(0)}$ exists, then $F^\bullet\in \gS(\q_{(0)})^{\q_{(0)}}$. 
\end{prop}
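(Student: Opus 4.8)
The plan is to use that, for each fixed $s\in\bbk^{\!^\times}$, the map $\vp_s$ is by construction~\eqref{eq:fi_s} a Lie algebra isomorphism $(\q,[\,\,,\,]_{(s)})\isom(\q,[\,\,,\,])$, and then to let $s\to0$. First I would extend $\vp_s$ to an algebra automorphism of $\gS(\q)$ and record the identity $\vp_s(\{a,b\}_{(s)})=\{\vp_s(a),\vp_s(b)\}$ for all $a,b\in\gS(\q)$: it holds on generators directly from~\eqref{eq:fi_s} and propagates by the Leibniz rule. Thus $\vp_s$ is a Poisson isomorphism $(\gS(\q),\{\,\,,\,\}_{(s)})\isom(\gS(\q),\{\,\,,\,\})$. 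Applying $\vp_s$ to $\{\vp_s^{-1}(F),\eta\}_{(s)}$ and using $\{F,\vp_s(\eta)\}=0$, I conclude that $\vp_s^{-1}(F)$ is $\{\,\,,\,\}_{(s)}$-invariant for every $s\ne0$.

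Next, writing $F=\sum_i F_i$ with $\vp_s(F_i)=s^iF_i$ and letting $d$ be the top weight (so that $F^\bullet=F_d$), I have $\vp_s^{-1}(F)=\sum_i s^{-i}F_i$, and the rescaled family
\[
G_s:=s^{d}\vp_s^{-1}(F)=\sum_i s^{d-i}F_i
\]
is a polynomial in $s$ with coefficients in $\gS(\q)$, is again $\{\,\,,\,\}_{(s)}$-invariant, and tends to $F_d=F^\bullet$ as $s\to0$ because $d-i\ge0$ with equality only at $i=d$. Hence $\{G_s,\eta\}_{(s)}=\sum_i s^{d-i}\{F_i,\eta\}_{(s)}=0$ for every $s\ne0$ and every $\eta\in\q$. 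Letting $s\to0$ and invoking $\lim_{s\to0}[x,y]_{(s)}=\{x,y\}_0$, the Leibniz rule gives $\lim_{s\to0}\{F_i,\eta\}_{(s)}=\{F_i,\eta\}_0$ for each $i$, so only the $i=d$ summand survives and
\[
0=\lim_{s\to0}\{G_s,\eta\}_{(s)}=\{F^\bullet,\eta\}_0 .
\]
As this holds for all $\eta\in\q$, it says precisely that $F^\bullet\in\gS(\q_{(0)})^{\q_{(0)}}$; since an invariant splits into homogeneous components, each of which is again invariant and is preserved by the $\gS$-grading, I may assume $F$ homogeneous throughout.

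The step I expect to be the main obstacle is the interchange of the limit $s\to0$ with the Poisson bracket in the last display. A priori the structure constants of $[\,\,,\,]_{(s)}$ are Laurent polynomials in $s$ and might blow up as $s\to0$; the passage is legitimate precisely because the contraction $\q_{(0)}$ is assumed to exist, which guarantees that these limits are finite. Since for fixed $i$ and $\eta$ the relevant brackets all lie in a single finite-dimensional graded component of $\gS(\q)$, convergence there is coefficientwise and the interchange is justified.
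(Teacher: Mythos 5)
Your proof is correct, and it is essentially the argument behind this statement: the paper gives no proof of Proposition~\ref{prop:bullet}, citing \cite[Lemma~3.3]{contr}, and the proof there proceeds exactly as you do --- transport the invariance of $F$ through the Poisson isomorphism $\vp_s$, rescale $\vp_s^{-1}(F)=\sum_i s^{-i}F_i$ by $s^d$, and pass to the limit $s\to 0$, where the existence of $\q_{(0)}$ ensures the structure constants are polynomial in $s$ so the bracket commutes with the limit. Your closing remarks correctly identify and dispose of the only delicate point (the limit--bracket interchange), so nothing is missing.
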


Recall that 
%%In the setting of 
in Section~\ref{subs:periodic}, we considered periodic gradings $\q=\bigoplus_{i\in\Z_m}\q_i$. 
In this setting, 
let $\varphi_s$ be given by 
\begin{equation} \label{phi-s}
{\varphi_s}|_{\q_i}=s^i{\cdot}\id.
\end{equation}
Then the contraction $\q\ard\q_{(0,\vth)}$ is defined by $\varphi$.
The contraction $\q\ard\q_{(\infty,\vth)}$ is obtained from the map 
$\bbk^{\!^\times}\to\GL(\q)$ with $s\mapsto s^m \varphi_s^{-1}$.

Suppose that $\q=\g$ is reductive. Then 
$\gS(\g)^{\g}$ has an algebraically independent set of homogeneous generators 
$F_1,\ldots,F_{\rk\g}$. We say that  $F_1,\ldots,F_{\rk\g}$ is a {\it good generating system} in $\gS(\g)^\g$
({\sf g.g.s.}\/ {\it for short}) for $\vp$, if $F_1^\bullet,\dots,F_{\rk\g}^\bullet$ are
algebraically independent. 

Let 
$\varphi$ be associated with $\vth$ as in \eqref{phi-s}. Then $F_1,\ldots,F_{\rk\g}$ is  a {\sf g.g.s.} for $\vth$, if it is 
a {\sf g.g.s.} in $\gS(\g)^{\g}$ for $\varphi$.

\subsection{Invariants of current algebras}
\label{sec-ohne}
Symmetric invariants of certain finite-dimensional Lie algebras are building blocks for  
%%The algebras 
$\gZ(\wq,t^{-1})=\gS(t^{-1}\q[t^{-1}])^{\q[t]}$ and $\gZ(\wq_-,[0]):=\gS(\gt q[t^{-1}])^{\gt q[t]}$.  
 Note that $\gZ(\wq_-,[0])$ and $\gZ(\wq,[0])$ are isomorphic. 
Set 
\[\mW_{-n}=\gt q t^{-n}\oplus\gt q t^{1-n}\oplus\ldots\oplus\gt q t^{-2}\oplus\q t^{-1}\subset \gt q[t,t^{-1}]/\gt q[t].
\] 
Let $\q[t]/(t^n)$ be a {\it truncated current algebra}, also known as 
a {\it  Takiff Lie algebra}   modelled on $\q$. 
%%%%
Then $\gS(\mW_{n})^{\q[t]}\cong \gS(\q[t]/(t^n))^{\q[t]}$. %%, where 
%%%
Since $\q t^N$ with $N\ge n$ acts on $\q[t]/(t^n)$ as zero, the algebra $\gS(\q[t]/(t^n))^{\q[t]}$ is the algebras of symmetric invariants of $\q[t]/(t^n)$. 

Both algebras, $\gZ(\wq,t^{-1})$ and $\gZ(\wq_-,[0])$, 
 have direct limit structures, in particular,  
%% We have 
$\gZ(\wq,t^{-1})=\varinjlim_{n\in\mathbb N} \gS(\mW_{-n})^{\q[t]}$.
Here $\gS(\mW_{-n})^{\q[t]}\subset \gS(\mW_{-n-1})^{\q[t]}$. 
 If we consider $\gZ(\wq_-,[0])$, 
then $\mW_{-n}$ is replaced by $\mW_{-n}\oplus\q$ and 
\[
(\gS(\mW_{-n})\oplus\q)^{\q[t]}\cong \gS(\q[t]/(t^{n+1}))^{\q[t]}.
\] 
Whenever symmetric invariants of each $\q[t]/(t^n)$ with $n\in\mathbb N$ form a polynomial ring, the algebras 
$\gZ(\wq,t^{-1})$ and $\gZ(\wq_-,[0])$ have algebraically independent set of generators. For example, this is the case for 
a reductive $\g$ by \cite{rt}. More results on symmetric invariants of Takiff Lie algebras can be found in 
\cite{kot-T}. 

The FF-centre $\gt z(\wg,t^{-1})$ is the unique quantisation of $\gZ(\wg,t^{-1})$, see \cite{RybU}. By \cite{ff}, it is a polynomial ring.
The same result for $\gt z(\wg,[0])$ is obtained in \cite{ir}. 

\section{Fixed-point subalgebras  of a  loop algebra}%% 
\label{sec-twist}

Any finite   order automorphism $\vartheta\in\Aut(\q)$
can be extended to an automorphism $\Theta$ of the loop algebra $\wq$ by letting 
$\Theta(\xi t^k)=\zeta^k \vth(\xi) t^k$ %%and  
for $\xi\in\q$ and $k\in \Z$. This allows us to introduce the 
%%%
 %%% 
%% 
{\it $\vth$-twisted loop algebra of $\gt q$} as the fixed point subalgebra of $\Theta$,  i.e., 
\[
\wq^\vth=\q [t,t^{-1}]^{\vartheta}:=(\q [t,t^{-1}])^{\Theta}.
\] %%where 
%%% .  
Let $m$ be the order of $\vth$. Then 
%% For instance, 
$\Theta$ acts as  identity on $\q_{m-1}t\subset \q t$. %%In this notation, 
The extension of 
$\vth^{-1}$ acts as $\zeta^{-2}{\cdot}\id$ on $\q_{m-1}t$ and as identity  on $\q_1 t$. Hence it 
is not equal to $\Theta^{-1}$ unless $m=2$. 
If $\vth$ is an outer automorphism of a simple Lie algebra $\gt g$, then  $\g [t,t^{-1}]^{\vartheta}$
%% This 
is related to a twisted Kac--Moody 
algebra, 
see \cite[Chap.~8]{kac} for details.  %%% As in the 
%%Similarly to 
We have 
\begin{gather*}
\q [t,t^{-1}]^{\vartheta} = \ldots\oplus \q_2 t^{-2}\oplus\q_1 t^{-1}\oplus\q_0\oplus\q_{m-1}t \oplus \q_{m-2}t^2\oplus\ldots\,,\\
\q [t,t^{-1}]^{\vartheta^{-1}} = \ldots\oplus \q_{m-2} t^{-2}\oplus\q_{m-1} t^{-1}\oplus\q_0\oplus\q_{1}t \oplus \q_{2}t^2\oplus\ldots\,.
\end{gather*}

Set $\wq_+=\q[t]$.  
We  identify $\wq_+^\vth:=(\wq_+)^\Theta$ with 
$(\q [t,t^{-1}]/t^{-1}\q [t^{-1}])^\Theta$.  This leads to an action of $\q [t^{-1}]^\vth=(\q [t^{-1}])^\Theta$ on 
$\wq_+^\vth$ and hence on $\gS(\wq_+^\vth)$.
We consider the  invariants of $(\q [t^{-1}])^\Theta$ in $\gS(\wq_+^\vth)$, i.e., 
the subalgebra
%in the  
\[
\gZ(\wq^\vth,[0])=\gS(\wq_+^\vth)^{\q [t^{-1}]^\vth}. %% , 
%%%%% \qquad ?
\] 
%% which can 
Furthermore, 
 identifying  $\wq^\vth/\gt q[t^{-1}]^\vth$ with $(t\gt q[t])^\Theta$, 
we regard 
$$
\gZ(\wq^\vth)=\gZ(\wq^\vth,t)=\gS(\wq^\vth/\gt q[t^{-1}]^\vth)^{\gt q[t^{-1}]^\vth}
$$ 
as a subalgerbra of $\gS(t\gt q[t])$.
In the down to earth terms, $\gZ(\wq^\vth)$ consists of the elements  
$Y\in\gS((t\gt q[t])^\Theta)$ such that $\{xt^{k},Y\}\in \gt q[t^{-1}]^\vth\gS(\gt q[t,t^{-1}])$ for each $x t^k\in\q[t^{-1}]^\vth$.
%%%%  and each $k\le 0$. 

In \cite{fo}, we studied the algebra of $\g[t]^\vth$-invariants  $\gZ(\wg^\vth,t^{-1})\subset \gS((t^{-1}\g[t^{-1}])^\Theta)$,
which is naturally isomorphic to $\gZ(\wg^\vth)$. %%% =\gS((\g [t,t^{-1}]/\g [t])^\Theta)^{\g [t]^\vth}$. 
If $\ind\g_{(0)}=\rk\g$, then   $\gZ(\wg^\vth,t^{-1})$ is Poisson-commutative by \cite[Thm.\,8.2{\sf (i)}]{fo}.

The  $\Z$-grading of $\wq^\vth$ is connected with automorphisms of 
direct sums $\q\oplus\ldots\oplus\q$. 
Let $\gt r=\gt q^{\oplus n}$ be a direct sum of $n$ copies of $\q$. %% be a direct sum of $n$ copies of $\h $. 
Let  $\tilde\vartheta\in\Aut(\gt r)$ be the 
composition of $\vartheta$ applied to one copy of $\q$  only  and a cyclic permutation of the
summands. Formally speaking, 
\begin{equation} \label{tilde-vth} 
\tilde\vth\left((y_1,y_2,\ldots,y_n)\right)=(y_n,\vth(y_1),y_2,\ldots, y_{n-1})  \ \ 
\text{ 
for any } \ \  (y_1,y_2,\ldots,y_n)\in\q^{\oplus n}.
\end{equation}
The order of $\tilde\vartheta$ is $N=nm$.   %%As before 
%%Similarly to the case of $\wg_-$, %% untwisted case, 
Note that $\gt r^{\tilde\vth}\cong\q_0$ is embedded in $\q^{\oplus n}$ diagonally. 
Let $\tilde\zeta$ be a primitive $N$-th root of unity 
such that $\tilde\zeta^n=\zeta$. Consider 
the  $\BZ_N$-grading $\gt r=\bigoplus_{i=0}^{N-1}\gt r_i$ associated with $\tilde\vth$ and $\tilde\zeta$
by the rule of Section~\ref{subs:periodic}. 
Then $\gt r_{km+i} \cong \gt q_i$ as a $\gt q_0$-module for all $0\le k<n$ and $0\le i<m$.   

Let $\gt r_{(0)}$ and $\gt r_{(\infty)}$ be the contractions of $\gt r$ related to $\tilde\vth$ 
in the sense of  Section~\ref{subs:periodic}. 
There are isomorphisms  
$\gt r\cong \q [t]^\vartheta/(t^{N}{-}1)$ and $\gt r_{(0)}\cong \q [t]^{\vartheta^{-1}}/(t^{N})$. 
Furthermore $\q [t]^\vartheta/(t^{N+1})\cong \q_0\ltimes\gt r_{(\infty)}$, where 
$\gt q_0$ is a subalgebra of the semi-direct product $\q_0\ltimes\gt r_{(\infty)}$ isomorphic to 
$\gt r^{\tilde\vth}$ and the ideal $\gt r_{(\infty)}$ is a $\gt r^{\tilde\vth}$-module. 

We regard $\q_0^*$ as the annihilator $\Ann(\bigoplus_{i=1}^{m-1} \q_i)\subset\q^*$, 
and similarly  $\gt r_0^*$ as a subspace of $\gt r^*$. 

\begin{lm}\label{0-reg}
Suppose that $\q_0^*\cap\q^*_{\sf reg}\ne\varnothing$. Then 
$\gt r_0^*\cap\gt r^*_{\sf reg}\ne\varnothing$.
\end{lm}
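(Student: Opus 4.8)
The plan is to exhibit an explicit element of $\gt r^*_{\sf reg}$ lying in $\gt r_0^*$, namely the diagonal one. By hypothesis we may fix $\xi\in\q_0^*\cap\q^*_{\sf reg}$, and I would set $\eta=(\xi,\xi,\ldots,\xi)\in(\q^*)^{\oplus n}=\gt r^*$. The claim is that $\eta\in\gt r_0^*\cap\gt r^*_{\sf reg}$.

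Regularity of $\eta$ is immediate from the direct-sum structure of $\gt r=\q^{\oplus n}$: the coadjoint action is componentwise, so $\gt r^\eta=\q^{\eta_1}\oplus\cdots\oplus\q^{\eta_n}$ and hence $\ind\gt r=n\ind\q$, with $\eta$ regular precisely when every component is regular in $\q^*$. Since each component of the diagonal $\eta$ equals $\xi\in\q^*_{\sf reg}$, the element $\eta$ is regular.

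The real content is the membership $\eta\in\gt r_0^*$. Here I would first record the standard description of the degree-zero piece of the dual of a $\BZ_N$-graded space: averaging by the grading operator (i.e. the projection $\frac1N\sum_{p=0}^{N-1}\tilde\vth^p$ onto $\gt r_0$) shows that $\gt r_0^*=\Ann\big(\bigoplus_{j\ge1}\gt r_j\big)$ coincides with the fixed points $\{\eta\in\gt r^*\mid \eta\circ\tilde\vth=\eta\}$; likewise $\q_0^*=\{\xi\in\q^*\mid \xi\circ\vth=\xi\}$. It then remains to verify that the diagonal $\eta$ is $\tilde\vth$-invariant. Using $\tilde\vth(z)=(z_n,\vth(z_1),z_2,\ldots,z_{n-1})$ for $z=(z_1,\ldots,z_n)$, the contributions of $z_2,\ldots,z_n$ to $\eta(\tilde\vth(z))$ and to $\eta(z)$ match, so that $\eta(\tilde\vth(z))-\eta(z)=\langle\xi,\vth(z_1)-z_1\rangle$. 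Since $\vth-\id$ maps $\q$ into $\bigoplus_{i\ge1}\q_i$ and $\xi\in\q_0^*$ annihilates $\bigoplus_{i\ge1}\q_i$, this difference vanishes; thus $\eta\circ\tilde\vth=\eta$ and $\eta\in\gt r_0^*$.

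The only genuine bookkeeping --- and the step most prone to indexing slips --- is the identification of $\gt r_0^*$ with the contragredient $\tilde\vth$-invariants together with the reduction of $\tilde\vth$-invariance of the diagonal to $\vth$-invariance of $\xi$; everything else is formal. As a cross-check I could instead pair $\eta$ against the explicit $\tilde\vth$-eigenvectors: for $v\in\q_i$ the corresponding vector in $\gt r_{km+i}$ is $(v,\lambda^{n-1}v,\ldots,\lambda v)$ with $\lambda=\tilde\zeta^{km+i}$, and its pairing with $\eta$ equals $\langle\xi,v\rangle\sum_{a=0}^{n-1}\lambda^a$, which vanishes either because $\langle\xi,v\rangle=0$ when $i\ge1$, or because $\sum_{a=0}^{n-1}\lambda^a=\frac{\lambda^n-1}{\lambda-1}=0$ when $i=0$ and $\lambda\ne1$. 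I find the invariance argument cleaner and would present that.
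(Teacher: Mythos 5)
Your proof is correct and follows essentially the same route as the paper: both take the diagonal element $(\xi,\ldots,\xi)\in\gt r^*$ and deduce regularity from $\gt r^{(\xi,\ldots,\xi)}=(\q^\xi)^{\oplus n}$ together with $\ind\gt r=n\cdot\ind\q$. The only difference is that you carefully verify the membership in $\gt r_0^*$ via $\tilde\vth$-invariance of the diagonal functional (correctly, including the eigenvector cross-check), whereas the paper treats this as immediate from the definition of $\tilde\vth$ in \eqref{tilde-vth}.
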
 
\begin{proof}
Take any $\xi\in\q_0^*\cap\q^*_{\sf reg}$ and consider 
$\tilde\xi=(\underbrace{\xi,\xi,\ldots,\xi}_{n \ \text{entries}})\in\gt r_0^*\subset\gt r^*$. Here 
$\gt r^{\tilde\xi}=(\q^\xi)^{\oplus n}$ and $\dim\gt r^{\tilde\xi}=n{\cdot}\ind\q=\ind\gt r$.
Therefore $\tilde\xi\in\gt r^*_{\sf reg}$.
\end{proof}

\begin{thm}           \label{twist-q}
 If $\q^*_{\sf reg}\cap\q_0^*\ne\varnothing$, then $\gZ(\wq^\vth,[0])$ is a 
Poisson-commutative subalgebra. 
\end{thm}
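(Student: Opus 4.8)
The plan is to deduce the Poisson-commutativity of the infinite-dimensional algebra $\gZ(\wq^\vth,[0])=\gS(\wq_+^\vth)^{\q[t^{-1}]^\vth}$ from the finite-dimensional invariant theory of the direct sums $\gt r=\q^{\oplus n}$ equipped with $\tilde\vth$, by passing through the isomorphisms recorded just before the statement. First I would make the invariance condition explicit: $Y\in\gS(\wq_+^\vth)$ lies in $\gZ(\wq^\vth,[0])$ \iff $\{z,Y\}\in(t^{-1}\q[t^{-1}])^\vth\gS(\wq^\vth)$ for every $z\in\q[t^{-1}]^\vth$ (for $z\in\q_0$ this is genuine $\q_0$-invariance inside $\gS(\wq_+^\vth)$, while for $z$ of negative $t$-degree only the projection to $\gS(\wq_+^\vth)$ must vanish). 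Since $\gS(\wq_+^\vth)$ is graded by the $t$-degree with finite-dimensional homogeneous components, the bracket $\{Y,Y'\}$ of two elements of $\gZ(\wq^\vth,[0])$ involves only finitely many of the generators $\q_{\overline{-j}}t^j$; so it suffices to test Poisson-commutativity at each finite level. Paralleling the untwisted identification of Section~\ref{sec-ohne}, the degree-$\le N$ part of $\gZ(\wq^\vth,[0])$ is identified with the algebra of symmetric invariants of the truncated twisted current algebra $\q[t]^\vth/(t^{N+1})\cong\q_0\ltimes\gt r_{(\infty)}$, where $N=nm$.

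Each such algebra of symmetric invariants (the Poisson centre of the corresponding Takiff algebra) is commutative on its own, so the real content is \emph{cross-level} mutual commutativity: an invariant that is central at level $N$ need not stay central at a higher level $N'$. To organize these nested centres into a single commuting family I would invoke the bi-Hamiltonian structure of the periodic grading. By Section~\ref{subs:periodic} the brackets $\{\,\,,\,\}$ and $\{\,\,,\,\}_0$ attached to $(\gt r,\tilde\vth)$ are compatible, so the Casimirs of the whole pencil Poisson-commute with respect to every bracket in it; Proposition~\ref{prop:bullet} then places the leading $\bullet$-components of invariants into $\gS(\gt r_{(0)})^{\gt r_{(0)}}$ and $\gS(\gt r_{(\infty)})^{\gt r_{(\infty)}}$, matching the semidirect factor $\gt r_{(\infty)}$ appearing at finite level and the companion contraction $\gt r_{(0)}\cong\q[t]^{\vth^{-1}}/(t^{N})$.

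The hypothesis enters precisely to guarantee that $\gZ(\wq^\vth,[0])$ is actually exhausted by this commuting pencil, rather than acquiring extra, possibly non-commuting, invariants. Here Lemma~\ref{0-reg} is the key input: $\q^*_{\sf reg}\cap\q_0^*\ne\varnothing$ forces $\gt r^*_{\sf reg}\cap\gt r_0^*\ne\varnothing$, i.e. the contraction $\gt r\ard\gt r_{(0)}$ is good and the index is preserved, $\ind\gt r_{(0)}=\ind\gt r$. Under this the $\bullet$-components of a generating set of invariants remain algebraically independent, the separation of invariants across levels degenerates consistently into the contraction, and every element of the truncated $\gZ(\wq^\vth,[0])$ is recovered from the commuting family. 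Assembling the levels, level-wise Poisson-commutativity together with the $t$-degree bound on $\{Y,Y'\}$ yields $\{\gZ(\wq^\vth,[0]),\gZ(\wq^\vth,[0])\}=0$.

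The main obstacle I anticipate is the bookkeeping in the middle step. One must verify that the loop-algebra invariance conditions transfer correctly across $\q[t]^\vth/(t^{N+1})\cong\q_0\ltimes\gt r_{(\infty)}$ --- tracking where the central copy $\q_0\cong\gt r^{\tilde\vth}$ sits and how the nilpotent factor $\gt r_{(\infty)}$ encodes the strictly positive $t$-degrees --- and that the truncated bracket on $\q[t]^\vth/(t^{N+1})$, which differs in top degree from the honest bracket on $\wq_+^\vth$, does not create spurious invariants. Making this dictionary exact, and pinning down that the regularity supplied by Lemma~\ref{0-reg} is exactly what the pencil construction needs ($\ind\gt r_{(0)}=\ind\gt r$) to force the invariants into the Poisson-commutative family, is where the genuine work lies.
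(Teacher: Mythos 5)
Your proposal reproduces the paper's skeleton (pass to $\gt r=\q^{\oplus n}$ with the automorphism $\tilde\vth$, identify finite pieces of the invariant algebra with $\cz_\infty^{\q_0}\subset\gS(\gt r_{(\infty)})$, feed the hypothesis through Lemma~\ref{0-reg}), but two of its load-bearing steps have genuine gaps. First, the reduction to finite level: you propose to test commutativity inside the truncation $\q[t]^\vth/(t^{N+1})\cong\q_0\ltimes\gt r_{(\infty)}$, and you correctly sense the danger that its bracket ``differs in top degree from the honest bracket'' --- but this is not a bookkeeping nuisance to be noted, it is fatal as stated: the truncated bracket is the \emph{contracted} bracket $[\,\,,\,]_\infty$, and the honest bracket of two elements supported in $t$-degrees up to $N'$ produces components of degree up to roughly $2N'$, exactly the part the truncation kills. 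Vanishing in the truncation therefore proves nothing. The paper's proof resolves this with a device absent from your plan: given $f_1,f_2\in\gS(\mathbb W_{N'})$ with $N'=n'm$, it passes to the quotient of $\q[t^{-1}]^\Theta$ modulo $t^{-N}-1$ with $N=2n'm$ --- \emph{twice} the support --- so that the quotient is the honest Lie algebra $\gt r\cong\q^{\oplus n}$, no wrap-around occurs, and $\{f_1,f_2\}=0$ \iff $\{\bar f_1,\bar f_2\}=0$ in $\gS(\gt r)$. (It also works with $\gZ(\wq^\vth_-,[0])$ rather than $\gZ(\wq^\vth,[0])$, but that is cosmetic.)

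Second, the finite-dimensional commutativity itself. Your pencil argument rests on the claim that ``the Casimirs of the whole pencil Poisson-commute with respect to every bracket in it.'' That is true for Casimirs of two \emph{distinct} members of the pencil, but here both $\bar f_1$ and $\bar f_2$ lie in $\cz_\infty$, the Poisson centre of the single degenerate member $\{\,\,,\,\}_\infty$; since $\{\,\,,\,\}=\{\,\,,\,\}_0+\{\,\,,\,\}_\infty$, one only gets $\{\bar f_1,\bar f_2\}=\{\bar f_1,\bar f_2\}_0$, which has no a priori reason to vanish. Relatedly, your framing that each level ``is commutative on its own, so the real content is cross-level mutual commutativity'' is backwards: the whole difficulty is single-level commutativity of $\cz_\infty^{\gt r_0}$ with respect to the \emph{original} bracket on $\gt r$. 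The paper does not prove this from the pencil heuristic; it cites \cite[Thm.\,2.3{\sf (i)}]{some}, whose hypothesis $\gt r_0^*\cap\gt r^*_{\sf reg}\ne\varnothing$ is precisely what Lemma~\ref{0-reg} delivers. So the regularity assumption enters as the hypothesis of that commutativity theorem --- not, as you suggest, to guarantee that the invariant algebra is ``exhausted'' by a commuting family of $\bullet$-components of generators (an argument that could not even get started for a general non-reductive $\q$, where $\gS(\gt r)^{\gt r}$ need not admit a good finite generating set). Without the doubling trick and without the cited theorem (or an independent proof of it), your argument does not close.
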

\begin{proof}
%% Suppose that $f\in\gS(\wg_-^\vartheta)$ is an invariant of $\h [t]^{\vartheta}$. 
For technical reasons, it is more convenient to deal with the %%change the variable $t\mapsto t^{-1}$ and show that 
isomorphic algebra 
$\gZ(\wq^\vth_-,[0])=\gS((\q [t,t^{-1}]/t\q [t])^\Theta)^{\q [t]^\vth}$ instead of $\gZ(\wq^\vth,[0])$. 
We prove that $\gZ(\wq^\vth_-,[0])$ is Poisson-commutative. 
For $N=nm$, set  
\begin{equation} \label{WN}
    %%  f\in \gS(\mathbb W_N) \ \text { for } \  
     \mathbb W_N:=(\q t^{-N+1}\oplus  \ldots\oplus\q  t^{-1})^\Theta \oplus   \q_0 \subset\q[t^{-1}]^\vth \cong (\q [t,t^{-1}]/t\q [t])^\Theta.
\end{equation}
Then  $\q [t]^{\vartheta}$ acts on $\mathbb W_N$ and any subspace $(\q t^{k})^\Theta$ with $k \ge N$ acts on $\mathbb W_N$ as zero. 
In particular, the quotient %%% $\q [t^{-1}]^{\vartheta}$ acts as 
$\q [t]^\vartheta/(t^{N+1})\cong\q_0\ltimes\gt r_{(\infty)}$ acts on the space  $\mathbb W_N$. %% , where $\gt r=\q^{\oplus n}$,  and 
Furthermore, 
$\mathbb W_N\cong \gt r_{(\infty)}$ as a $(\q_0\ltimes\gt r_{(\infty)})$-module.
Therefore there is an isomorphism of commutative algebras 
$\gS(\mathbb W_N)^{\q [t]^{\vartheta}}\cong \cz_\infty^{\q_0}$, where $\cz_\infty$ is the Poisson centre of 
$\gS(\gt r_{(\infty)})$. 

Suppose $f_1,f_2\in \gS(\q[t^{-1}]^\vartheta)$. Then there is 
$N'=n'm$ such that  $f_1,f_2\in \gS(\mathbb W_{N'})$. Set $n=2n'$. 
Then $\{f_1,f_2\}=0$ if and only if the images $\bar f_1,\bar f_2$ of $f_1, f_2$ in 
$\gS(\q[t^{-1}]^\Theta/(t^{-N}-1))\cong  \gS(\gt r)$  
Poisson-commute. %% The  
%%% $\Z_N$
Suppose $f_1,f_2\in\gZ(\wq^\vth_-,[0])$. Then 
 $\bar f_1,\bar f_2\in \cz_\infty^{\q_0}$ by the construction.  
In view of Lemma~\ref{0-reg}, we have $\gt r_0^*\cap\gt r^*_{\sf reg}\ne\varnothing$. Then 
 by
 \cite[Thm.\,2.3{\sf (i)}]{some}, the algebra $\cz_\infty^{\q_0}=\cz_\infty^{\gt r_0}$ is 
 %%% 
 a Poisson-commutative 
subalgebra of $\gS(\gt r)$. %%% and  %%, which is commutative w.r.t. $\{\,\,,\,\}_{0}$ as well. 
%%% 
Hence $\{\bar f_1,\bar f_2\}=0$. 
\end{proof}

\begin{cl} \label{ohne}
For any finite-dimensional Lie algebra $\q$, we have
$\{\gZ(\wq,[0]),\gZ(\wq,[0])\}=0$. %%% is Poisson-commutative.
\end{cl}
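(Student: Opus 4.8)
The plan is to deduce this directly from Theorem~\ref{twist-q} by specialising to the trivial automorphism $\vth=\id$, which has order $m=1$. First I would record that for $\vth=\id$ the periodic grading degenerates to $\q_0=\q$ with $\q_i=0$ for $i\neq 0$; consequently $\wq^\vth=\q[t,t^{-1}]$ is the full loop algebra, $\wq_+^\vth=\q[t]$, and $\q[t^{-1}]^\vth=\q[t^{-1}]$. Under these identifications the definition $\gZ(\wq^\vth,[0])=\gS(\wq_+^\vth)^{\q[t^{-1}]^\vth}$ reduces verbatim to $\gS(\q[t])^{\q[t^{-1}]}$, which is exactly the algebra $\gZ(\wq,[0])$ introduced earlier. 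No additional identification is needed, since Theorem~\ref{twist-q} already phrases its conclusion in terms of $\gZ(\wq^\vth,[0])$ itself.

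The second and decisive step is to verify the hypothesis of Theorem~\ref{twist-q}, namely $\q^*_{\sf reg}\cap\q_0^*\neq\varnothing$. Because $\q_0=\q$ forces $\q_0^*=\q^*$, this intersection is simply $\q^*_{\sf reg}$. By \eqref{eq:regul-set} and the remark following it, $\q^*_{\sf reg}$ is a dense open subset of $\q^*$, hence nonempty for every finite-dimensional $\q$. Thus the hypothesis holds \emph{unconditionally} in the untwisted case, and this is precisely the reason the corollary needs no assumptions on $\q$: the one constraint appearing in Theorem~\ref{twist-q} becomes vacuous as soon as the grading is concentrated in degree $0$.

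Combining these observations, Theorem~\ref{twist-q} applied to $\vth=\id$ yields that $\gZ(\wq^\vth,[0])=\gZ(\wq,[0])$ is Poisson-commutative, i.e.\ $\{\gZ(\wq,[0]),\gZ(\wq,[0])\}=0$. I expect no genuine obstacle here beyond checking that the twisted machinery specialises correctly; the only point deserving care is the routine bookkeeping that identifies $\gZ(\wq^{\id},[0])$ with $\gZ(\wq,[0])$ and confirms that the regularity condition degenerates to the always-true statement $\q^*_{\sf reg}\neq\varnothing$ when $\q_0=\q$.
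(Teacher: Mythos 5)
Your argument is correct and is essentially the paper's own proof: the paper derives Corollary~\ref{ohne} by exactly this specialisation of Theorem~\ref{twist-q} to trivial $\vth$, where $\q_0=\q$ makes the hypothesis $\q^*_{\sf reg}\cap\q_0^*\ne\varnothing$ automatic because $\q^*_{\sf reg}$ is a dense open subset of $\q^*$. The paper additionally cites Lemma~\ref{0-reg}, but that lemma functions inside the proof of Theorem~\ref{twist-q}, so your verification adds nothing less.
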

\begin{proof}
If $\vth$ is  trivial, then $\q_0=\q$ and Lemma~\ref{0-reg} and Theorem~\ref{twist-q} apply. %%Thus,  
\end{proof}

\begin{cl} \label{cl-red}
For any finite order automorphism $\vth$ of a reductive Lie algebra $\g$, the algebra $\gZ(\wg^\vth,[0])$ is Poisson-commutative. 
\end{cl}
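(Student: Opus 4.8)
The plan is to deduce the statement from Theorem~\ref{twist-q}: since $\g$ is reductive, I only need to verify that the hypothesis $\g^*_{\sf reg}\cap\g_0^*\ne\varnothing$ holds for \emph{every} finite order automorphism $\vth$. To translate this into the adjoint picture, I would fix a nondegenerate symmetric invariant bilinear form on $\g$ that is moreover $\vth$-invariant; such a form exists because $\g$ is reductive (take an invariant nondegenerate form on $[\g,\g]$ and any $\vth$-invariant nondegenerate form on the centre). Using it to identify $\g^*$ with $\g$, the locus $\g^*_{\sf reg}$ corresponds to the regular elements of $\g$, and, since the form is both invariant and $\vth$-invariant, $\g_i$ is paired with $\g_{m-i}$; hence $\g_0^*=\Ann(\bigoplus_{i\ne 0}\g_i)$ is identified with $\g_0=\g^\vth$. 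Thus it suffices to produce a single regular element of $\g$ lying in $\g^\vth$.

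To construct one, let $\te_0$ be a Cartan subalgebra of the reductive Lie algebra $\g^\vth$, and let $\el=\g^{\te_0}$ be its centraliser in $\g$. As the centraliser of a torus, $\el$ is a $\vth$-stable reductive subalgebra containing $\te_0$ in its centre, and $\g^h\supseteq\el$ with equality for a generic $h\in\te_0$ (a root can enlarge the stabiliser only by vanishing on $h$, which is avoided generically for roots not vanishing on all of $\te_0$). The heart of the matter is the claim that $\el$ is actually a Cartan subalgebra of $\g$, equivalently that $\el$ is abelian. Granting this, a generic $h\in\te_0\subset\g^\vth$ satisfies $\dim\g^h=\dim\el=\rk\g=\ind\g$, so $h$ is the desired regular element, and Theorem~\ref{twist-q} applies.

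It remains to prove that $\el$ is abelian. Write $\el=\z(\el)\oplus[\el,\el]$ with semisimple, $\vth$-stable derived subalgebra $[\el,\el]$. Because $\te_0$ is a maximal torus of the reductive algebra $\g^\vth$, it is self-centralising there, so $\el^\vth=\el\cap\g^\vth=(\g^\vth)^{\te_0}=\te_0$; since $\te_0$ lies in the centre of $\el$, this gives $[\el,\el]^\vth\subseteq\te_0\cap[\el,\el]=0$. On the other hand, a finite order automorphism of a nonzero semisimple Lie algebra always has a nonzero fixed-point subalgebra (see \cite[Ch.\,8]{kac}). Therefore $[\el,\el]=0$, so $\el$ is abelian, as claimed.

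The only genuinely nontrivial ingredient here is this last fact that finite order automorphisms of semisimple Lie algebras cannot be fixed-point-free; combined with the observation that fixed-point-freeness of $[\el,\el]$ forces $\el$ to degenerate to a Cartan subalgebra, this is where I expect the main point to lie. The remaining steps — the reduction through the $\vth$-invariant form and the standard description of centralisers of tori in reductive Lie algebras — are routine bookkeeping.
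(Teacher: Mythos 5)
Your strategy coincides with what the paper's one-line proof delegates to the literature: the paper simply cites \cite[\S 8.8]{kac} for the fact that $\g_0^*\cap\g^*_{\sf reg}\ne\varnothing$ holds for every finite order automorphism of a reductive $\g$, and then invokes Theorem~\ref{twist-q}, exactly as you do. What you add is a proof of the cited fact. Your core argument --- $\el=\g^{\te_0}$ satisfies $\el\cap\g^\vth=\te_0\subseteq\z(\el)$, hence $[\el,\el]^\vth\subseteq\te_0\cap[\el,\el]=0$, hence $[\el,\el]=0$ because a finite order automorphism of a nonzero semisimple Lie algebra has nonzero fixed points --- is correct and is in substance the proof, given in Chapter~8 of \cite{kac}, that the centraliser in $\g$ of a Cartan subalgebra of $\g^\vth$ is a Cartan subalgebra of $\g$. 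So you are unwinding the paper's reference rather than replacing it; that is legitimate, but two of your steps need repair.

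First, your reduction via a ``$\vth$-invariant nondegenerate form on $\g$'' is false as stated, because such a form need not exist on the centre. A $\vth$-invariant symmetric form pairs the $\zeta^i$-eigenspace $\z_i\subseteq\z(\g)$ with $\z_{-i}$, so nondegeneracy forces $\dim\z_i=\dim\z_{-i}$; already for $\g=\z(\g)=\bbk$ with $\vth=\zeta\,\id$ and $m\ge 3$, the only invariant form is zero, and averaging cannot help. The fix is cheap and should be made explicit: for reductive $\g=\z(\g)\oplus[\g,\g]$ one has $\g^\xi=\z(\g)\oplus[\g,\g]^{\xi'}$ with $\xi'=\xi|_{[\g,\g]}$, so regularity of $\xi$ depends only on $\xi'$; taking $\xi$ to vanish on $\z(\g)$ and on every $\g_i$ with $i\ne 0$ reduces the whole statement to semisimple $\g$, where the Killing form is automatically $\vth$-invariant and your identification of $\g_0^*$ with $\g_0=\g^\vth$ is sound. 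Second, you use without comment that $\te_0$ is a torus \emph{of} $\g$ --- this is what makes $\el$ the reductive centraliser of a torus and what legitimises the weight/genericity argument giving $\g^h=\el$. The issue is that elements of $\te_0$, in particular elements central in $\g^\vth$, must be shown $\ad$-semisimple in $\g$, which is not automatic for a subalgebra. It does hold here: the Killing form pairs $\g_i$ with $\g_{-i}$, so its restriction to $\g_0$ is nondegenerate, hence $\g_0$ is reductive in $\g$, and $\z(\g_0)$ acts by scalars on each irreducible $\g_0$-submodule of $\g$, hence semisimply. Both this reductivity statement and the no-fixed-point-free-automorphism fact you quote sit in the same opening lemmas of \cite[Ch.\,8]{kac}; once you cite them in full (or insert the two arguments above), your proof closes and agrees with the content behind the paper's citation.
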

\begin{proof}
Condition $\g_0^*\cap\g^*_{\sf reg}\ne\varnothing$ is satisfied for any finite order automorphism of 
a {\bf reductive} Lie algebra, see e.g.~\cite[\S 8.8]{kac}.
\end{proof}

Below we describe  several classes of non-reductive Lie algebras satisfying  the assumption of Theorem~\ref{twist-q}. 

\begin{ex}
Consider $\q=\g_{(\infty)}$ associated with a reductive $\g$ and $\vth\in\Aut(\g)$ of order $m$. Then 
$\vth$ is an automorphism of $\q$ and the Abelian subalgebra $\q_0=\q^\vth$ is equal to $\g_0$ as a vector space.  
If $\xi\in\g_0^*$ is regular in $\g^*$, then it is also regular in $\q^*$, see \cite[Lemma\,3.6]{fo}. 
Hence Theorem~\ref{twist-q} applies to $\g_{(\infty)}$ and $\vth$.
\end{ex}

\begin{ex}
In a natural way $\vth$ extends to an automorphism of the semi-direct product $\tilde{\gt g}=\g_0\ltimes\g_{(\infty)}$,
where $\g_0$ is a subalgebra isomorphic to $\g^\vth$ acting on the ideal $\g_{(\infty)}\lhd\tilde{\gt g}$ as on a $\g^\vth$-module.  
Here $\tilde{\gt g}^\vth=\g_0\ltimes\g_0^{\sf ab}$, where $\g_0^{\sf ab}$ is an Abelian ideal. 
It was observed in \cite[Sect.\,4]{some} that any $\xi\in\g^*_{\sf reg}\cap(\g_0^{\sf ab})^*$ is regular in 
$\tilde\g^*$. 
%%%By a similar reason,
Hence Theorem~\ref{twist-q} applies to $\tilde\g$ as well. 
\end{ex}

\begin{thm} \label{g-twist}
 The algebras $\gZ(\wg^\vth,[0])$  and $\gZ(\wg^\vth_-,[0])$
are  polynomial rings in infinitely many variables. %%% if and only if\/ $\gS(\g_{(0)})^{\g_{(0)}}$ is a 
%%%% polynomial ring for each $n\ge 1$
\end{thm}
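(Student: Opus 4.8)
Since $\gZ(\wg^\vth,[0])$ and $\gZ(\wg^\vth_-,[0])$ are isomorphic, I would prove the assertion for $\gZ(\wg^\vth_-,[0])$. The strategy is to present this algebra as an increasing union of finite-dimensional invariant algebras, each a polynomial ring, with generators compatible with the inclusions; such a union is automatically a polynomial ring in infinitely many variables.

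For the exhaustion I would reuse the set-up of the proof of Theorem~\ref{twist-q}: for $N=nm$ one has $\gS(\mathbb{W}_N)^{\g[t]^\vth}\cong\cz_\infty^{\g_0}$, where $\cz_\infty$ (depending on $n$) is the Poisson centre of $\gS(\gt r_{(\infty)})$ for the $\infty$-contraction of the \emph{reductive} Lie algebra $\gt r=\g^{\oplus n}$, and $\g_0\cong\gt r_0=\gt r^{\tilde\vth}$. These form a direct system $\gS(\mathbb{W}_N)^{\g[t]^\vth}\subseteq\gS(\mathbb{W}_{N'})^{\g[t]^\vth}$ ($N\le N'$) whose union is $\gZ(\wg^\vth_-,[0])$, since any invariant involves only finitely many layers $\g_{\bar j}t^{-j}$. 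I would record the candidate generators explicitly: writing a generic element of $\g[t^{-1}]^\vth$ as $y(t)=\sum_{j\ge 0}y_jt^{-j}$ with $y_j\in\g_{\bar j}$ and fixing $\vth$-eigenvector generators $F_1,\dots,F_{\rk\g}$ of $\gS(\g)^\g$, one expands $F_i\bigl(y(t)\bigr)=\sum_k H_i^{(k)}t^{-k}$; for each $i$ only $k$ in a fixed residue class modulo $m$ occur. Raising the level only widens the range of admissible $k$, so the $H_i^{(k)}$ are manifestly compatible with the inclusions.

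The heart of the proof is that each $\cz_\infty^{\g_0}$ is a polynomial ring, freely generated by the admissible $H_i^{(k)}$; this is the twisted analogue of the Rais--Tauvel description \cite{rt} of the invariants of a truncated current algebra. Reductivity of $\gt r=\g^{\oplus n}$ is decisive here. By Lemma~\ref{0-reg} the condition $\g_0^*\cap\g^*_{\sf reg}\ne\varnothing$, valid for every finite-order automorphism of a reductive $\g$ (as in Corollary~\ref{cl-red}), yields $\gt r_0^*\cap\gt r^*_{\sf reg}\ne\varnothing$. I would first verify, by the standard Feigin--Frenkel/Gaudin computation resting on $\ad$-invariance of the $F_i$, that the $H_i^{(k)}$ lie in $\cz_\infty^{\g_0}$ and are algebraically independent; I would check the latter by evaluating differentials at a point of $\gt r_0^*\cap\gt r^*_{\sf reg}$ (the resulting statement being Theorem~\ref{twist-inf}). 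It then remains to prove \emph{generation}. Here I would match transcendence degrees: combining $\gt r_0^*\cap\gt r^*_{\sf reg}\ne\varnothing$ with the structural results of \cite{fo} and \cite[Thm.~2.3]{some} for invariants of contractions of reductive Lie algebras, one shows that $\trdeg\,\cz_\infty^{\g_0}$ equals the number of admissible $H_i^{(k)}$ at that level, and a Kostant-type normality/separation argument then forces the subalgebra they generate to be all of $\cz_\infty^{\g_0}$.

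Finally I would pass to the limit. By the compatibility above, for $N\le N'$ the inclusion $\cz_\infty^{\g_0}\hookrightarrow(\cz_\infty^{\g_0})'$ carries a free generating set onto part of a free generating set, and the larger algebra is polynomial over the smaller one on the newly appearing $H_i^{(k)}$. A nested union of polynomial rings with this property is again a polynomial ring, now on the countable set $\{H_i^{(k)}\}$; since for each $i$ the admissible $k$ form an infinite arithmetic progression, there are infinitely many variables. This proves the statement for $\gZ(\wg^\vth_-,[0])$, hence for $\gZ(\wg^\vth,[0])$. I expect the main obstacle to be the finite-level generation: constructing the $H_i^{(k)}$ and verifying their algebraic independence is routine, whereas showing that they exhaust $\cz_\infty^{\g_0}$ rather than a proper subalgebra needs the codimension bound for the singular set and a careful transcendence-degree count --- precisely the step where reductivity of $\g^{\oplus n}$ is used decisively.
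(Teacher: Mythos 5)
Your overall architecture coincides with the paper's: reduce to $\gZ(\wg^\vth_-,[0])$, exhaust it by the finite levels $\gS(\mathbb W_{nm})^{\g[t]^\vth}\cong\cz_\infty^{\g_0}$ with $\gt r=\g^{\oplus n}$, prove each finite level is a polynomial ring, and pass to the limit by the standard graded-extension argument. The paper, however, obtains the finite-level statement by quoting \cite[Thm.\,4.6]{some}, whereas you attempt to re-prove it, and your re-proof has a genuine gap --- in fact two. First, your proposed free generating set $\{H_i^{(k)}\}$ is incomplete: at $k=0$ the expansion of $F_i\bigl(y(t)\bigr)$ yields only the restrictions $F_i|_{\g_0^*}$, and these do not generate $\gS(\g_0)^{\g_0}$ in general. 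By Theorem~\ref{twist-inf} each level is freely generated by the basic symmetric invariants $\bh_1,\ldots,\bh_{\rk\g_0}$ of $\g_0$ \emph{together with} the $t$-polarisations $(F_i)_{[k]}$ with $k>0$. In Example~\ref{E6} ($\g$ of type ${\sf E}_6$, $\g_0=\gt{so}_{10}\oplus\gt{so}_2$, involution), $\gS(\g_0)^{\g_0}$ has a generator of degree $1$, which is visibly not a polynomial in restrictions of invariants of degrees $2,5,6,8,9,12$; hence $\mathsf{alg}\langle H_i^{(k)}\rangle$ is a \emph{proper} subalgebra of $\cz_\infty^{\g_0}$ there, your transcendence-degree count is wrong at each level (it omits roughly $\rk\g_0$ minus the number of $\vth$-fixed $F_i$ generators), and the limit algebra you construct would be a proper subalgebra of $\gZ(\wg^\vth_-,[0])$.

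Second, even with the corrected generator list, your generation step does not work as stated: equal transcendence degree never forces equality of algebras ($\bbk[x^2]\subsetneq\bbk[x]$ already has full transcendence degree), and the result you invoke, \cite[Thm.\,2.3]{some}, gives only Poisson-commutativity of $\cz_\infty^{\g_0}$, not generation. What is actually required is the full strength of \cite[Thm.\,4.6]{some}: a Rais--Tauvel/Kostant-type analysis, in the spirit of \cite{rt} but adapted to the semi-direct product $\g_0\ltimes\gt r_{(\infty)}\cong\g[t]^\vth/(t^{N+1})$, resting on degree sums and codimension estimates for the contraction $\gt r_{(\infty)}$ --- exactly the ``main obstacle'' you flag but leave open, and which your ``normality/separation argument'' does not close. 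The paper's proof is short precisely because it delegates this point to a proved theorem; a self-contained version along your lines must reprove that theorem, including the $\gS(\g_0)^{\g_0}$-part of the generating set, not merely the algebraic independence of the polarisations (which, as you indicate, does follow from Lemma~\ref{0-reg} and evaluation of differentials at a point of $\gt r_0^*\cap\gt r^*_{\sf reg}$).
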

\begin{proof}
 %%The additional  
It suffices to consider only $\gZ(\wg^\vth_-,[0])$. We may also safely assume that $\g$ is semisimple. 
Similarly to the case of $\gZ(\wq_-,[0])$, see Section~\ref{sec-ohne},
the algebra $\gZ(\wg^\vth_-,[0])=\varinjlim \gS(\mathbb W_{nm})^{\g [t]^{\vartheta}}$ has  
a direct limit structure, cf.~\eqref{WN}.  
Each algebra $\gS(\mathbb W_N)^{\g [t]^{\vartheta}}$ with $N=nm$ 
is isomorphic to $\cz_\infty^{\g_0}\subset\gS(\gt r_{(\infty,\tilde\vth)})$ for $\gt r=\g^{\oplus n}$
and $\tilde\vth$ defined by \eqref{tilde-vth}. 
Recall that $\gt r_0=\gt r^{\tilde\vth}\cong\g_0$, i.e.,  $\cz_\infty^{\g_0}=\cz_\infty^{\gt r_0}$. 
The algebra $\cz_\infty^{\gt r_0}$
is a polynomial ring by \cite[Thm.\,4.6]{some}. 
By a standard argument on graded algebras,  %%% see e.g. the proof of Lemma~\ref{kosiks-lemma}, 
any algebraically independent set of generators of $\gS(\mathbb W_N)^{\g [t]^{\vartheta}}$ extends 
to an algebraically independent set of generators of $\gS(\mathbb W_{N{+}m})^{\g [t]^{\vartheta}}$.
In  the direct limit, one obtains a set of algebraically independent  generators of  $\gZ(\wg_-^\vth,[0])$.
%%% \gS(\wh_-^{\vartheta})^{\h [t]^{\vartheta}}$. 
\end{proof}

Theorem~4.6 of \cite{some} contains an explicit description of generators of $\cz_\infty^{\g_0}\subset\gS(\g^{\oplus n})$. 
This will lead to a description of generators for  $\gZ(\wg^\vth,[0])$ and $\gZ(\wg_-^\vth,[0])$. In order to formulate the relevant results,  some more notation is introduced below. 

For $k\in \mathbb Z$, let $\bar k\in\{0,1,\ldots,m{-}1\}$ be the 
residue of $k$ modulo $m$.  Then 
$$
\g[t,t^{-1}]^{\vartheta}=\bigoplus_{k\in\mathbb Z} \g_{\bar k} t^{-k}.
$$ 
Let $\phi_s\!: (\g[t,t^{-1}]^{\vartheta})^* \to (\g[t,t^{-1}]^{\vartheta})^*$ with $s\in\mK^{^\times}$ be a linear map  multiplying the elements of 
$(\g _{\bar k}t^{-k})^*$ with $s^{k}$. Next we canonically identify $(\g_{\bar k} t^{-k})^*$ with 
$\g_{\bar k}^*$. Take  $\xi\in(\g[t^{-1}]^{\vartheta})^*$ that is given by a finite 
sequence $\xi=(\xi_{0},\xi_{1},\xi_{2},\ldots,\xi_{L})$ with $\xi_{k}\in \g_{\bar k}^*$. 
This means that $\xi(\eta t^{-k})=\xi_{k}(\eta)$ for $\eta\in\g_{\bar k}$ and $0\le k\le L$, while 
$\xi(\eta t^{-k})=0$ for $k>L$. %%A
Set 
\[|\xi|= \sum_{i=0}^L \xi_{i} \in \g^*.
\] 
Note that the construction works with evident changes for a negative integer $L$. %%%  can be negative as 
If $F\in\gS(\g)$, then for each $s\in\mK^{^\times}$ and each $\xi$ as above, 
$$F(|\phi_s(\xi)|)=\sum_{k\ge 0}  s^kF_{[k,L]}(\xi),
$$
 where
$F_{[k,L]}\in\gS(\bigoplus_{i=0}^L \g_{\bar i} t^{-i})$ and 
 the sum is actually finite. 
If $L\ge k$, then $F_{[k,L]}$ does not depend on $L$. Therefore 
set $F_{[k]}=F_{[k,k]}$. 
We say that elements $F_{[k]}$ are {\it $t$-polarisations} of $F$.  If $F$ is an eigenvector of $\vth$,
i.e., $\vth(F)=\zeta^u F$ with $0\le u<m$, then $F_{[k]}=0$, if $\bar k\ne u$. 
If we take  $\xi=(\xi_L,\ldots,\xi_{-1},\xi_0)\in(\g[t]^{\vartheta})^*$ with $L\le 0$, then 
%% $L$ is negative, 
$F(|\phi_s(\xi)|)$ decomposes as 
a sum over $k\le 0$  and  here $F_{[k]}\in\gS(\g[t]^\vth)$. 

Let $\varphi\!:\bbk^{\!^\times}\to\GL(\g)$ be the map defined by~\eqref{phi-s} with $\q_i$ replaced by $\g_i$. 
 Let $X=\xi_1\ldots\xi_d$ be a monomial such that each 
$\xi_j\in\g$ is an eigenvector of $\vth$. Suppose that $\varphi_s(X)=s^\ell X$. %%%, where $0\le \ell<m$.
Let $\bar\xi_i\in\g[t^{-1}]$ be a copy of $\xi_i$ obtained under the natural identification of vector spaces 
$\g\cong\g_{m-1}t^{1-m}\oplus\ldots\oplus\g_1 t^{-1}\oplus\g_0$.  
Applying the above procedure, we obtain the following description of 
 $t$-polarisations of $X$. %%%  are described by the  formula.
For each $j\ge 0$, %% we have  then 
\begin{equation}\label{pol-t} 
X_{[\ell+jm]} \ \text{ is a sum of the monomials } \
\bar\xi_1 t^{\alpha_1 m}\ldots \bar\xi_d t^{\alpha_d m} \ \text{ with } \  \sum_{i=1}^d \alpha_i=-j \ \text{ and }  \ \alpha_i\le 0.
\end{equation}

From now on, assume that $\g$ is semisimple. 
Let $r=\rk\g$ be the rank of $\g$.
Since $\vth$ is an automorphism of $\g$, it acts on $\gS(\g)^{\g}$ and there is a set of 
of homogeneous  generators $\{F_i\mid 1\le i\le r\}\subset\gS(\g)^{\g}$ that consists of $\vartheta$-eigenvectors. 
Suppose that $\vth(F_i)=\zeta^{\ell_i} F_i$, where $0\le\ell_i<m$.  Set $d_i=\deg F_i$. 

\begin{thm}        \label{twist-inf}
Let $\{F_1,\ldots,F_r\}\subset\gS(\g)^{\g}$ be a set of homogeneous  generators that are $\vartheta$-eigenvectors. 
Then 
$\gZ(\wg_-^\vartheta,[0])$ is freely generated by
basic symmetric invariants of $\g_0$ and 
 the nonzero 
$t$-polarisations $(F_i)_ {[k]}$ with $k >0$. %%%  related to the subspaces $((\h  t^{-j})^{\vartheta})^*$. 
The same statement holds for $\gZ(\wg^\vartheta,[0])$, if we take $k<0$. 
\end{thm}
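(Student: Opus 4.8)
The plan is to deduce the statement from the explicit description of the generators of $\cz_\infty^{\g_0}$ given in \cite[Thm.\,4.6]{some}, transporting those generators through the isomorphisms of the proof of Theorem~\ref{g-twist} and recognising them as $t$-polarisations. I will treat $\gZ(\wg_-^\vth,[0])$ in detail and obtain $\gZ(\wg^\vth,[0])$ by the same scheme.

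First I would recall the direct-limit presentation $\gZ(\wg_-^\vth,[0])=\varinjlim_n\gS(\mW_N)^{\g[t]^\vth}$ with $N=nm$, together with the isomorphism $\gS(\mW_N)^{\g[t]^\vth}\cong\cz_\infty^{\g_0}\subset\gS(\gt r_{(\infty)})$ for $\gt r=\g^{\oplus n}$ and $\tilde\vth$ as in \eqref{tilde-vth}. Under $\gt r\cong\g[t]^\vth/(t^N-1)$ the diagonal $\gt r_0=\gt r^{\tilde\vth}$ is $\g_0$, and $\g_0\cong\gt r_0$ is central in the nilpotent algebra $\gt r_{(\infty)}$; hence $\gS(\g_0)^{\g_0}\subset\cz_\infty^{\g_0}$, and these basic invariants of $\g_0$ are stable along the direct limit. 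This gives the first family of generators.

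The core of the argument is the second family. By \cite[Thm.\,4.6]{some}, the remaining generators of $\cz_\infty^{\g_0}$ are prescribed polarisations of the basic invariants $F_1,\dots,F_r$ with respect to the $\BZ_N$-grading of $\gt r$ and its contraction $\gt r_{(\infty)}$. Tracking the scaling $\varphi_s$ of \eqref{phi-s} and its dual $\phi_s$ through $\gt r\cong\g[t]^\vth/(t^N-1)$ turns these into the coefficients in $F(|\phi_s(\xi)|)=\sum_k s^kF_{[k]}(\xi)$; the period-shift formula \eqref{pol-t} for $X_{[\ell+jm]}$ is exactly the image of the contraction-polarisation of a monomial $X$. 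Each such element is automatically $\g[t]^\vth$-invariant, being the image of a member of $\cz_\infty^{\g_0}$, so lies in $\gZ(\wg_-^\vth,[0])$. One then checks that at level $n$ the polarisations occurring are precisely the nonzero $(F_i)_{[k]}$ supported on $\mW_N$, that the $k=0$ contributions coincide with restrictions of the $F_i$ to $\g_0$ and are therefore absorbed into $\gS(\g_0)^{\g_0}$, and that passing from level $n$ to level $n+1$ adjoins exactly the next block of $t$-polarisations. Since the matching identifies the listed elements with the generating set of \cite[Thm.\,4.6]{some} at each finite level, they generate and are algebraically independent there, and in the direct limit the second family becomes all nonzero $(F_i)_{[k]}$ with $k>0$. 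The graded argument of Theorem~\ref{g-twist} then yields a free generating set of $\gZ(\wg_-^\vth,[0])$. For $\gZ(\wg^\vth,[0])$ one runs the identical scheme inside $\gS(\g[t]^\vth)$ with the action of $\g[t^{-1}]^\vth$, so that $L\le 0$ in the definition of the $t$-polarisations and the generators become the $(F_i)_{[k]}$ with $k<0$.

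The hard part will be the matching just described: making the dictionary between the grading/contraction polarisations of \cite[Thm.\,4.6]{some} and the $t$-polarisation formula \eqref{pol-t} genuinely precise rather than merely formal --- in particular the exact bookkeeping of which $(F_i)_{[k]}$ are supported on $\mW_N$ at level $n$, the nesting that makes the embeddings $\gS(\mW_N)^{\g[t]^\vth}\hookrightarrow\gS(\mW_{N+m})^{\g[t]^\vth}$ compatible with the listed generators, and the verification that the $k=0$ layer is exactly accounted for by $\gS(\g_0)^{\g_0}$ (which is what forces the restriction $k>0$).
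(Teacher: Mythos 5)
Your outline reproduces the paper's strategy faithfully (direct limit over the spaces $\mW_{nm}$, reduction to $\cz_\infty^{\g_0}\subset\gS(\gt r_{(\infty)})$, appeal to \cite[Thm.\,4.6]{some}, absorption of the $k=0$ layer into $\gS(\g_0)^{\g_0}$), but the step you defer as ``the hard part'' is not mere bookkeeping --- it is the mathematical core of the proof, and as stated your version of it contains a genuine gap. For $n\ge 2$ the elements $F_1,\dots,F_r$, sitting in one summand of $\gt r=\g^{\oplus n}$, do \emph{not} generate $\gS(\gt r)^{\gt r}$, so \cite[Thm.\,4.6]{some} cannot be applied to ``prescribed polarisations of the basic invariants $F_1,\dots,F_r$'' directly: that theorem requires a generating set of $\gS(\gt r)^{\gt r}$ consisting of $\tilde\vth$-eigenvectors, and the obvious generators $\tilde\vth^k(F_i)$ are permuted, not scaled, by $\tilde\vth$. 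The paper supplies the missing construction: the discrete Fourier averages
\[
H_{rj+i}=\frac{1}{n}\sum_{k=0}^{n-1}\omega^{kj}\tilde\zeta^{-k\ell_i}\,\tilde\vth^{k}(F_i),
\]
cf.~\eqref{ir}, which are eigenvectors with $\tilde\vth(H_{rj+i})=\tilde\zeta^{\ell_i-mj}H_{rj+i}$; one then has to verify that $\tilde\zeta^{\ell_i-mj}=1$ forces $\ell_i=j=0$, and to compute, via \cite[Eq.\,(4${\cdot}$7)]{some}, that the highest component $H_{rj+i}^{\bullet}$ w.r.t.\ $\gt r\ard\gt r_{(\infty)}$ equals $n^{-d_i}(F_i)_{[\ell_i]}$ for $j=0$ and $n^{-d_i}(F_i)_{[m(n-j)+\ell_i]}$ for $j>0$. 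This eigenvalue and highest-component computation is exactly what determines which $t$-polarisations appear at level $n$ and makes the nesting across levels work; your proposal assumes its conclusion.

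A second, smaller omission: to pass from the resulting indexed list $\{(F_i)_{[mj+\ell_i]}\}$ to the theorem's clean formulation ``all nonzero $(F_i)_{[k]}$ with $k>0$'', one must show that $(F_i)_{[k]}=0$ whenever $k\not\equiv\ell_i \pmod m$. The paper proves this with a separate device you do not mention: the auxiliary extension $\hat\vth\in\Aut(\g[t,t^{-1}])$ with $\hat\vth(\xi t^k)=\vth(\xi)t^k$, which commutes with $\Theta$ and scales $(F_i)_{[k]}$ simultaneously by $\zeta^{\ell_i}$ and by $\zeta^{k}$; together with the observation that $(F_i)_{[0]}\in\gS(\g_0)^{\g_0}$ when $\vth(F_i)=F_i$, this closes the argument. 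So: right route, correct skeleton, but the two load-bearing ingredients --- the eigenvector generators \eqref{ir} with their eigenvalue/highest-component analysis, and the $\hat\vth$-vanishing argument --- are absent rather than merely compressed.
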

\begin{proof}
We use the direct limit structure $\gZ(\wg_-^\vth,[0])=\varinjlim \gS(\mathbb W_{nm})^{\g [t]^{\vartheta}}$,
where $\mathbb W_{nm}$ with $nm=N$ is the same as in \eqref{WN}. 
If $n=1$, then  $\mathbb W_m\cong\gt g_{(\infty)}$ and $\gS(\mathbb W_m)^{\g [t]^{\vartheta}}\cong \cz_{\infty}^{\gt g_0}$,
where $\cz_{\infty}=\gS(\g_{(\infty)})^{\g_{(\infty)}}$. We identify $\mathbb W_m$ ang $\g_{(\infty)}$. 
By \cite[Thm.\,4.6]{some},  $\cz_{\infty}^{\gt g_0}$ is generated by $\gS(\g_0)^{\g_0}$ and the 
highest components $F_{i}^{\bullet}$, related to the 
contraction 
$\gt g\ard\g_{(\infty)}$, of the invariants $F_i$
 such that $\vth(F_i)=\zeta^{\ell_i} F_i$ with $0<\ell_i<m$.
%%% 
In our current notation, $F_{i}^{\bullet}=(F_i)_{[\ell_i]}$, %%. %%%%, 
see~\cite[Eq.\,(4${\cdot}$7)]{some}.   %%\eqref{in}.   

Suppose now that $n\ge 2$ and $\gt r=\g^{\oplus n}$. Set $N=nm$.
Let $\gt r_{(\infty)}=\gt r_{(\infty,\tilde\vth)}$ be the contraction of $\gt r$ associated with $\tilde\vth$, where $\tilde\vth$ is defined by 
 \eqref{tilde-vth}.  Recall from the proof of Theorem~\ref{twist-q} that 
 the commutative algebra $\gS(\mathbb W_{N})^{\g[t]^\vth}$ is isomorphic to $\cz_\infty^{\g_0}\subset\gS(\gt r_{(\infty)})$.
 
Let $\tilde\zeta$ be a primitive $N$-th root of unity 
such that $\tilde\zeta^n=\zeta$. Set
 $\omega=\tilde\zeta^m$. 
We identify $\g$ with the first direct summand of $\gt r$. This leads to an embedding 
$\gS(\g)\subset \gS(\gt r)$.  
Then $\gS(\gt r)^{\gt r}$ is generated 
by 
\begin{equation}              \label{ir}
     H_{rj+i}=\frac{1}{n}\left(F_i+\omega^j\tilde\zeta^{-\ell_i}\tilde\vth(F_i)+\omega^{2j}\tilde\zeta^{-2 \ell_i}\tilde\vth^2(F_i)+\ldots + \omega^{(n-1)j}\tilde\zeta^{(1-n) \ell_i}\tilde\vth^{n-1}(F_i)\right), 
     \end{equation}
with $1\le i\le r$ and $0\le j<n$. Furthermore,  
\[
\tilde\vth(H_{rj+i})=\omega^{(n-1)j} \tilde\zeta^{\ell_i} H_{rj+i}, \ \ \text{ where } \ \ 
\omega^{(n-1)j} \tilde\zeta^{\ell_i} =\tilde\zeta^{\ell_i-mj}=\tilde\zeta^{m(n-j)+\ell_i}.
\] 
Note that $\tilde\zeta^{\ell_i-mj}=1$ if and only if $\ell_i=j=0$.  
By \cite[Thm.\,4.6]{some},  $\cz_{\infty}^{\g_0}\subset\gS(\gt r_{(\infty)})$  is generated by $\gS(\g_0)^{\g_0}$ and 
the 
highest components
$H_{rj+i}^{\bullet}$, related to the 
contraction $\gt r\ard\gt r_{(\infty)}$, of the invariants $H_{rj+i}$ 
such that $\tilde\vth(H_{rj+i})\ne H_{rj+i}$. %%% with $0<r_i<m$.
Under the identification of vector spaces $\gt r\cong \mathbb W_N$,  
each $H_{rj+i}^{\bullet}$ is a $t$-polarisation of $F_i$.  It follows from \cite[Eq.\,(4${\cdot}$7)]{some}, that whenever  $\tilde\vth(H_{rj+i})\ne H_{rj+i}$, the highest component 
$H_{rj+i}^{\bullet}$ is equal to $n^{-d_i}(F_i)_{[\ell_i]}$, if $j=0$, and to 
$n^{-d_i}(F_i)_{[m(n-j)+\ell_i]}$, if $j>0$.

Let $\{\bh_1,\ldots,\bh_{\rk\g_0}\}\subset\gS(\g_0)^{\g_0}$ be a generating set. 
The equality $\gZ(\wg_-^\vartheta,[0])=\bigcup\limits_{n\ge 1} \gS(\mathbb W_{nm})^{\g[t]^{\vth}}$ and induction on 
$n$ show 
%%% Arguing inductively, we see
 that  
$\gZ(\wg_-^\vartheta,[0])$ is freely generated by the union
\[
\{\bh_i\mid 1\le i\le{\rk\g_0}\}\cup \{(F_i)_{[mj+\ell_i]} \mid \vth(F_i)\ne F_i, \ j\ge 0\}      \cup
\{ (F_i)_{[mj]} \mid \vth(F_i)=F_i, \ j\ge 1\}. 
\]
We finish the argument by considering other $t$-polarisations $(F_i)_{[k]}$. 

Let $\hat\vth\in\Aut(\g[t,t^{-1}])$ be an 
extension of $\vth$ obtained by letting $\hat\vth(t)=t$,
$\hat\vth(t^{-1})=t^{-1}$, i.e., $\hat\vth(\xi t^k)=\vth(\xi)t^k$ for $\xi\in\g$ and $k\in \Z$.  
Then $\hat\vth\ne\Theta$, if $m>1$.
%%  different 
Note that $\Theta$ and $\hat\vth$ commute in $\Aut(\g[t,t^{-1}])$.  
By the construction, 
$\hat\vth((F_i)_{[k]})=\zeta^{\ell_i}(F_i)_{[k]}$ and at the same time $\hat\vth((F_i)_{[k]})=\zeta^k (F_i)_{[k]}$.  
Hence $(F_i)_{[k]}=0$, if $k-\ell_i\not\in m\Z$. If $\vth(F_i)=F_i$, then $(F_i)_{[0]}\in\gS(\g_0)^{\g_0}$ and it is not needed in 
a generating set.
\end{proof}

\begin{rmk}\label{comp}
We compare results on $\gZ(\wg^\vartheta,[0])$ and $\gZ(\wg^\vth,t^{-1})$. \\[.2ex]
{\sf (i)} The former algebra is always Poisson-commutative. For the latter, the statement is proven under the condition that 
$\ind\g_{(0)}=\rk\g$ \cite{fo}. Most probably, this condition holds for all reductive Lie algebras, see \cite[Conj.\,3.1]{MZ23}.  \\[.3ex]
{\sf (ii)} 
If $\g_{(0)}$ satisfies even stronger conditions, then 
$\gZ(\wg^\vth,t^{-1})$ is a polynomial ring by \cite[Thm.\,8.2\,\&\,Prop.\,8.3]{fo}. 
However, 
if $\gS(\g_{(0)})^{\g_{(0)}}$ is not a polynomial ring, then $\gZ(\wg^\vth,t^{-1})$ is not either, see \cite[Thm.\,8.2(ii)]{fo}. There are examples such that $\gS(\g_{(0)})^{\g_{(0)}}$
does not have an algebraically independent set of generators \cite{Y-imrn}.  In contrast, Theorems~\ref{g-twist} and \ref{twist-inf} apply to all reductive Lie algebras asserting that $\gZ(\wg^\vartheta,[0])$ is a polynomial ring. 
\end{rmk}

\section{Finite-dimensional quotients of loop algebras}  \label{Q}

In \cite[Sect.\,7]{fo} and \cite{mult}, we have studied the images of  $\gZ(\wq)$ in $\gS(\q[t,t^{-1}]/(p))$ 
for polynomials $p$ such that $p(0)\ne 0$. In order to study similar quotients for $\g[t,t^{-1}]^\vth$, instead 
of $\g[t,t^{-1}]$, one has to assume that  %% the polynomial
$p$ is an eigenvector of $\Theta$ for the action given by $\Theta(t)=\zeta t$. The simplest and most interesting example is provided by $p=t^m-1$.   By the construction, there is
%%% Recall that there is 
an isomorphism of Lie algebras $\g[t,t^{-1}]^\vth/(t^m-1)\cong\g$.  Let 
$$
\psi\!: \gS(\g[t,t^{-1}]^\vth)\to  \gS(\g[t,t^{-1}]^\vth)/(t^m-1)=\gS(\g[t,t^{-1}]^\vth/(t^m-1))\cong\gS(\g)
$$
 be the quotient map.  
%%%% .... images of these  ........ 

In \cite{fo}, we have used maps $\varphi_s\!:\g\to\g$ such that $s\in\bbk^{\!^\times}$ and 
${\varphi_s}|_{\g_k}=s^k{\cdot}\id$ for each $k$, cf.~\eqref{phi-s}. Then in 
\cite[Sect.\,4]{fo}, each generator $F_i\in\gS(\g)^{\g}$ was decomposed as $F_i=\sum_{j\ge 0} F_{i,j}$, where 
$\varphi_s(F_{i,j})=s^j F_{i,j}$. In \cite{fo}, 
we introduced a Poisson-commutative subalgebra  $\gZ=\gZ(\g,\vth)\subset\gS(\g)$ associated with $\vth$.
It is shown there that 
$\gZ$ contains a subalgebra 
\begin{equation}\label{z-x}
\gZ_{\times}=\gZ_{\times}(\g,\vth)=\mathsf{alg}\langle F_{i,j} \mid 1\le i \le r \ \text{ and } \ 0\le j \rangle
\end{equation}
generated by $ F_{i,j} $. 
If $\ind\g_{(0)}=\rk\g$ and $\g_0$ is not Abelian, then $\gZ=\mathsf{alg}\langle \gZ_\times,\cz_0\rangle$;
if $\ind\g_{(0)}=\rk\g$ and $\g_0$ is Abelian, then $\gZ=\mathsf{alg}\langle \gZ_\times,\cz_0,\g_0\rangle$ 
\cite{fo,some}.

\begin{thm} \label{image1}
We have $\psi(\gZ(\wg^\vth_-,[0]))=\mathsf{alg}\langle \gZ_\times,\gS(\g_0)^{\g_0}\rangle$.
\end{thm}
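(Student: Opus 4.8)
The plan is to combine the explicit free generators of $\gZ(\wg^\vth_-,[0])$ furnished by Theorem~\ref{twist-inf} with a direct computation of their images under $\psi$. By that theorem, $\gZ(\wg^\vth_-,[0])$ is freely generated by basic symmetric invariants $\bh_1,\dots,\bh_{\rk\g_0}$ of $\g_0$ together with the nonzero $t$-polarisations $(F_i)_{[k]}$ with $k>0$. Since $\psi$ is an algebra homomorphism, $\psi(\gZ(\wg^\vth_-,[0]))$ is generated by the images of these generators, so it suffices to identify $\psi(\gS(\g_0)^{\g_0})$ and $\psi\bigl((F_i)_{[k]}\bigr)$ and to compare the subalgebra they generate with $\mathsf{alg}\langle\gZ_\times,\gS(\g_0)^{\g_0}\rangle$.

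First I would dispose of the $\g_0$-part: under the identification $\g[t,t^{-1}]^\vth/(t^m-1)\cong\g$, the component $\g_0=\g_0 t^0$ is carried isomorphically onto $\g_0\subset\g$, so $\psi$ restricts to the identity on $\gS(\g_0)$ and $\psi(\gS(\g_0)^{\g_0})=\gS(\g_0)^{\g_0}$. The heart of the argument is the computation of $\psi$ on the $t$-polarisations. Fix a generator $F_i$ with $\vth(F_i)=\zeta^{\ell_i}F_i$ and $d_i=\deg F_i$, and recall that $F_i=\sum_{w\ge 0}F_{i,w}$, where $F_{i,w}$ is the $\varphi$-homogeneous component of weight $w$ (with $F_{i,w}=0$ unless $w\equiv\ell_i\pmod m$). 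By \eqref{pol-t}, every monomial occurring in $(F_i)_{[k]}$ has the form $\bar\xi_1 t^{\alpha_1 m}\cdots\bar\xi_{d_i}t^{\alpha_{d_i}m}$ with all $t$-exponents divisible by $m$; applying $\psi$, each factor $t^{\alpha_a m}$ becomes $1$ and $\bar\xi_a$ is sent back to $\xi_a\in\g$, so $\psi$ collapses every monomial of $(F_i)_{[k]}$ arising from a fixed $\varphi$-homogeneous piece $F_{i,w}$ onto the corresponding monomial of $F_{i,w}$. The only bookkeeping left is to count, for each weight $w\le k$ with $w\equiv k\pmod m$, the tuples $(\alpha_1,\dots,\alpha_{d_i})$ with $\alpha_a\le 0$ and $\sum_a\alpha_a=-(k-w)/m$; this count is $\binom{(k-w)/m+d_i-1}{d_i-1}$, yielding the key formula
\begin{equation*}
\psi\bigl((F_i)_{[k]}\bigr)=\sum_{j\ge 0,\ k-jm\ge 0}\binom{j+d_i-1}{d_i-1}\,F_{i,k-jm}.
\end{equation*}

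With this formula the conclusion follows by unitriangularity. The leading term ($j=0$) is $F_{i,k}$ with coefficient $\binom{d_i-1}{d_i-1}=1$, and the remaining terms involve only the strictly lower components $F_{i,k-m},F_{i,k-2m},\dots$. Ordering the nonzero components $F_{i,\ell_i+sm}$ by $s$, the family $\{\psi((F_i)_{[\ell_i+sm]})\}$ is obtained from $\{F_{i,\ell_i+sm}\}$ by a unipotent lower-triangular transformation, so the two families generate the same subalgebra, and inverting recursively expresses each $F_{i,w}$ as a polynomial in the images $\psi\bigl((F_i)_{[k]}\bigr)$. The one point needing care is the term $F_{i,0}$, occurring when $\ell_i=0$, which is not produced directly because $(F_i)_{[0]}$ is excluded by $k>0$: here one uses that $F_{i,0}$ is the restriction of the $\g$-invariant $F_i$ to $\g_0$ and hence lies in $\gS(\g_0)^{\g_0}$, so it is already available among the generators. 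Recalling the definition \eqref{z-x} of $\gZ_\times$, this gives $\psi(\gZ(\wg^\vth_-,[0]))=\mathsf{alg}\langle F_{i,w}\ (1\le i\le r,\ w\ge 0),\ \gS(\g_0)^{\g_0}\rangle=\mathsf{alg}\langle\gZ_\times,\gS(\g_0)^{\g_0}\rangle$.

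I expect the main obstacle to be the combinatorial bookkeeping in the $t$-polarisation computation: correctly reading off from \eqref{pol-t} which components $F_{i,w}$ appear in $\psi\bigl((F_i)_{[k]}\bigr)$, verifying that the leading coefficient is exactly $1$, and handling the boundary case $\ell_i=0$ through the restriction $F_{i,0}\in\gS(\g_0)^{\g_0}$. Once the displayed formula is secured, the unitriangularity equating the two generating sets is routine.
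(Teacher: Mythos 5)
Your proposal is correct and follows essentially the same route as the paper: both invoke Theorem~\ref{twist-inf} for the free generators, note $\psi(\gS(\g_0)^{\g_0})=\gS(\g_0)^{\g_0}$, compute $\psi\bigl((F_i)_{[k]}\bigr)$ from \eqref{pol-t} as a unitriangular combination of the components $F_{i,k-jm}$ (the paper's \eqref{im1}), and recover the $F_{i,w}$ by induction on $j$, with the boundary component $F_{i,0}\in\gS(\g_0)^{\g_0}$ handled the same way. A minor remark: your coefficient $\binom{j+d_i-1}{d_i-1}$ is the correct ordered-tuple count (the general term $\binom{d_i+k-1}{k-1}$ displayed in \eqref{im1} appears to be a typo for $\binom{d_i+k-1}{k}$, as the paper's own explicit $k=1$ term $d_i$ shows), though this is immaterial since only the leading coefficient $1$ enters the unitriangularity argument.
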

\begin{proof}
By Theorem~\ref{twist-inf}, $\gZ(\wg^\vth_-,[0])$ is generated by $\gS(\g_0)^{\g_0}$ and 
$(F_i)_ {[k]}$ with $k>0$ such that $k-\ell_i\in m\Z$. Clearly $\psi(\gS(\g_0)^{\g_0})=\gS(\g_0)^{\g_0}$. 
Next we consider the images of $(F_i)_ {[k]}$  under $\psi$.

If $0<k<m$ and $(F_i)_{[k]}\ne 0$, then $k=\ell_i$ and $\psi((F_i)_{[k]})=F_{i,\ell_i}$ in the above notation. 
%%% of \cite[Sect.\,4]{fo}. 

Recall that $d_i=\deg F_i$.
%%% Suppose that $F_{i,\ell_i+jm}$ with
For $j\ge 1$, %% is nonzero. Then 
the image $\psi((F_i)_{[\ell_i+jm]})$ 
can be calculated using \eqref{pol-t} and it 
is equal to
\begin{equation} \label{im1}
F_{i,\ell_i+jm}+d_i F_{i,\ell_i+(j-1)m}+ \ldots + \binom{d_i+k-1}{k-1} F_{i,\ell_i+(j-k)m} +
\ldots +  \binom{d_i+j-1}{j-1} F_{i,\ell_i}.  
\end{equation}

For any $i$, we have $F_i =\sum_{j\ge 0} F_{i,\ell_i+jm} $, where the sum is  
finite and $F_{i,0}$ belongs to  $\gS(\g_0)$. 
By definition,  
$$
\mathsf{alg}\langle \gZ_\times,\gS(\g_0)^{\g_0}\rangle=\mathsf{alg}\langle F_{i,\ell_i+jm},\bh_u
\mid 1\le i\le r, \ 0\le j, \ 1\le u\le \rk\g_0\rangle. 
$$
%%% The above formula  %%%% ......  the following observation.
%%%
%%% .......... 
Arguing by induction on $j$ and using~\eqref{im1}, we prove that $\psi(\gZ(\wg^\vth_-,[0]))$ and $\mathsf{alg}\langle \gZ_\times,\gS(\g_0)^{\g_0}\rangle$ have the same generators. 
\end{proof}

If we consider the quotient of $\gZ(\wg^\vth,[0])$ by $t^m-1$, then $\gZ_\times$ should be replaced 
by $\gZ_\times(\g,\vth^{-1})\subset\gZ(\g,\vth^{-1})$. Recall that  a {\sf g.g.s.} for $\vth$ is defined in 
Section~\ref{subs:contr-&-inv}. 

\begin{thm} \label{image2}
Suppose $\ind\g_{(0)}=\rk\g$, there is a {\sf g.g.s.} $F_1,\ldots,F_r$ in $\gS(\g)^{\g}$ for $\vartheta$, and 
$\g_{(0)}$ has the {\sl codim}--$2$ 
property. 
Then $\psi(\gZ(\wg^\vth))=\gZ_\times=\gZ$.
%%%
\end{thm}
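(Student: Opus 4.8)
The plan is to establish the chain $\psi(\gZ(\wg^\vth))=\gZ_\times=\gZ$ by computing images of generators, exactly parallel to the proof of Theorem~\ref{image1} but now for the \emph{positive} part of the loop algebra and with $\vth$ in place of $\vth^{-1}$. First I would recall the direct-limit description of $\gZ(\wg^\vth)$ as the union $\varinjlim \gS(\mathbb W_N)^{\g[t]^\vth}$ and apply Theorem~\ref{twist-inf} (in its $k<0$ form) to get an explicit free generating set: the basic invariants $\bh_u\in\gS(\g_0)^{\g_0}$ together with the nonzero $t$-polarisations $(F_i)_{[k]}$ for $k<0$. The key point is that $\psi$ sends $t^m$ to $1$, so when we reduce modulo $(t^m-1)$ the polarisation $(F_i)_{[k]}$ collapses onto a sum of the components $F_{i,j}$; using the monomial description \eqref{pol-t} this gives, for $k=-(\ell'_i+jm)$ (where $\ell'_i$ is the $\vth$-eigenvalue exponent, now read for $\vth$ rather than $\vth^{-1}$), a binomial-coefficient combination of the $F_{i,\ell_i+(j-s)m}$ analogous to \eqref{im1}. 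An induction on $j$, with base case $j=0$ giving $\psi((F_i)_{[-\ell_i]})=F_{i,\ell_i}$ up to a nonzero scalar, then shows that $\psi(\gZ(\wg^\vth))$ and $\gZ_\times=\mathsf{alg}\langle F_{i,j}\mid 1\le i\le r,\ 0\le j\rangle$ have the same generators, hence $\psi(\gZ(\wg^\vth))=\gZ_\times$.

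For the second equality $\gZ_\times=\gZ$ I would invoke directly the structural result from \cite{fo,some} quoted just before the theorem: under $\ind\g_{(0)}=\rk\g$ one has $\gZ=\mathsf{alg}\langle\gZ_\times,\cz_0\rangle$ if $\g_0$ is non-Abelian, and $\gZ=\mathsf{alg}\langle\gZ_\times,\cz_0,\g_0\rangle$ if $\g_0$ is Abelian. So the task reduces to showing that $\cz_0=\gS(\g_{(0)})^{\g_{(0)}}$ (and, in the Abelian case, also $\g_0$) is already contained in $\gZ_\times$. This is precisely where the hypotheses \textbf{g.g.s.} and the \emph{codim}--$2$ property enter: the \textbf{g.g.s.} condition guarantees that the highest components $F_i^\bullet$ (with respect to the contraction $\g\ard\g_{(0)}$) are algebraically independent and hence form a generating set of $\cz_0$ after one accounts for the \emph{codim}--$2$ property, which by a standard Kostant-type argument (see the applications of \emph{codim}--$2$ in \cite{fo}) forces $\gS(\g_{(0)})^{\g_{(0)}}$ to be freely generated by exactly these $F_i^\bullet$. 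Since each $F_i^\bullet$ equals, up to scalar, the lowest-index nonzero component $F_{i,j}$ and therefore lies in $\gZ_\times$ by definition \eqref{z-x}, we obtain $\cz_0\subseteq\gZ_\times$, and the Abelian-case generators $\g_0$ are likewise absorbed because $\g_0$ appears among the degree-one $F_{i,j}$ or is handled by the $\bh_u$.

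The main obstacle I anticipate is the bookkeeping that matches the $t$-polarisation indices against the $\varphi_s$-grading indices after reduction modulo $(t^m-1)$: one must verify that the eigenvalue exponent $\ell_i$ appearing in Theorem~\ref{twist-inf} is read with respect to $\vth$ here, and that the sign conventions ($k<0$ versus the $k>0$ of Theorem~\ref{image1}) produce the correct translate $\gZ_\times(\g,\vth)$ rather than $\gZ_\times(\g,\vth^{-1})$. Once the correspondence $\psi((F_i)_{[-(\ell_i+jm)]})\leftrightarrow$ binomial combination of the $F_{i,\ell_i+sm}$ is set up cleanly, the inductive extraction of $F_{i,\ell_i+jm}$ from $\psi(\gZ(\wg^\vth))$ is routine, mirroring \eqref{im1}. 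The second equality, by contrast, is almost entirely a citation-driven reduction, so the real work is confined to the first equality and specifically to this index-matching step; after that, the proof closes by observing that the two algebras share a common generating set and are therefore equal.
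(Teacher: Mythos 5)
There is a genuine error at the heart of your first equality: you take the free generating set of $\gZ(\wg^\vth)$ from Theorem~\ref{twist-inf}, but that theorem (in both its $k>0$ and $k<0$ forms) describes the algebras $\gZ(\wg^\vth_-,[0])$ and $\gZ(\wg^\vth,[0])$, \emph{not} $\gZ(\wg^\vth)=\gZ(\wg^\vth,t)\subset\gS((t\g[t])^\Theta)$. Likewise the direct-limit pieces $\gS(\mathbb W_N)^{\g[t]^\vth}$ of \eqref{WN} belong to the ``$[0]$'' picture; the finite pieces of $\gZ(\wg^\vth,t)$ are the algebras $\gS\bigl(\bigoplus_{j=1}^{2n}\g_{\bar j}t^j\bigr)^{\g[t^{-1}]^\vth}$, which in general admit \emph{no} known description (this is the point of Example~\ref{E6}). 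The paper instead obtains the generators of $\gZ(\wg^\vth)$ from \cite[Prop.\,8.3]{fo}, and the three hypotheses of the theorem ($\ind\g_{(0)}=\rk\g$, existence of a {\sf g.g.s.}, the {\sl codim}--$2$ property of $\g_{(0)}$) are exactly what that proposition requires; in your argument these hypotheses play no role in the first equality, which is a symptom of the misidentification. The misidentification is not harmless: Theorem~\ref{twist-inf} puts the basic invariants $\bh_u\in\gS(\g_0)^{\g_0}$ into the generating set, so your computation would yield $\psi(\gZ(\wg^\vth))=\mathsf{alg}\langle \gZ_\times,\gS(\g_0)^{\g_0}\rangle$ rather than $\gZ_\times$ --- and the paper explicitly notes, right after the theorem, that $\gS(\g_0)^{\g_0}$ is \emph{not} contained in $\psi(\gZ(\wg^\vth))$, in contrast to $\psi(\gZ(\wg^\vth_-,[0]))$ of Theorem~\ref{image1}.

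A second, related error is that your induction starts at the wrong end of the $\varphi_s$-grading. The generators of $\gZ(\wg^\vth)$ attached to $F_i$ are $(F_i)_{[-b_i+jm]}$ with $j\le 0$ and $b_i=md_i-d_i^\bullet$, and the base case is $\psi((F_i)_{[-b_i]})=F_i^\bullet=F_{i,d_i^\bullet}$, the \emph{highest} component with respect to the contraction $\g\ard\g_{(0)}$ --- not $\psi((F_i)_{[-\ell_i]})=F_{i,\ell_i}$ as you propose; indeed $-b_i$ is the largest index of a nonzero $t$-polarisation of $F_i$ lying in $\gS(t\g[t])$, so $(F_i)_{[-\ell_i]}$ typically vanishes there. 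The induction then descends through binomial combinations of $F_{i,d_i^\bullet+jm}$ (the analogue of \eqref{im1}), recovering all $F_{i,\ell_i+km}$, $k\ge 0$, and hence $\gZ_\times$. The same highest/lowest confusion recurs in your second part, where you call $F_i^\bullet$ the ``lowest-index nonzero component''. That said, your treatment of the second equality is in the right spirit: under the {\sf g.g.s.} and {\sl codim}--$2$ hypotheses one indeed gets $\gZ_\times=\gZ$, and the paper simply cites \cite[Cor.\,4.7]{fo} and \cite[Cor.\,4.8]{some} for this; but the first equality as you argue it would prove a false statement, so the proposal does not stand without replacing Theorem~\ref{twist-inf} by \cite[Prop.\,8.3]{fo} and redoing the index bookkeeping from $F_i^\bullet$ downwards.
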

\begin{proof}
Thanks to \cite[Lemma\,4.4]{fo}, we may assume that each $F_i$ in our {\sf g.g.s.} is an eigenvector of $\vth$.  Then 
a set of generators of $\gZ(\wg^\vth)$  is described by   
\cite[Prop.\,8.3]{fo}. That proposition is formulated for $\gZ(\wg^\vth,t^{-1})$, but under the 
assumption that $\vth$ multiplies $t$ with $\zeta^{-1}$.  
%%making obvious changes.  
%%%% Each of 
Keeping this in mind, we deduce that 
the algebra $\gZ(\wg^\vth)$ is freely generated by $t$-polarisations 
$(F_i)_{[k]}$ with $k<0$. Furthermore,  the generators related to $F_i$ are of the form 
$(F_i)_{[-b_i+jm]}$, where $j\le 0$ and 
$$
(F_i)_{[-b_i]}\in \gS(\g_{m-1}t\oplus\g_{m-2}t^2\oplus\ldots \oplus \g_1 t^{m-1}\oplus \g_0 t^m)\cong\gS(\g_{(0)}) 
$$
 is 
equal to the highest component $F_i^\bullet$ of $F_i$ with respect to the contraction $\g\ard\g_{(0)}$.   

By \cite[Sect.\,4]{fo}, 
$$
F_i=F_{i,\ell_i}+F_{i,\ell_i+m}+F_{i,\ell_i+2m}+\ldots+F_{i,d_i^\bullet},
$$
where $F_{i,d_i^\bullet}=F_i^\bullet$ and $d_i^\bullet\in\ell_i+m\Z$ is the $\varphi_s$-degree of $F_i$. Then 
$b_i=md_i-d_i^\bullet$.  We have $\psi((F_i)_{[-b_i]})=(F_i)_{[-b_i]}=F_{i,d_i^\bullet}$. %%Furthermore, 
The other images  $\psi((F_i)_{[-b_i+jm]})$ are calculated using \eqref{pol-t}. 
If $F_{i,d_i^\bullet+jm}$ with $j<0$ is nonzero, then $d_i^\bullet+jm\ge \ell_i$ and 
$\psi((F_i)_{[-b_i+jm]})$ is equal to
\[
F_{i,d_i^\bullet+jm}+d_i F_{i,d_i^\bullet+(j-1)m}+ \ldots + \binom{d_i+k-1}{k-1} F_{i,d_i+(j-k)m} +
\ldots +  \binom{d_i^\bullet+j-1}{j-1} F_{i,d_i^\bullet},
\]
cf.~\eqref{im1}. % and \eqref{pol-t}.  
 If $F_{i,d_i^\bullet+jm}=0$, then 
$\psi((F_i)_{[-b_i+jm]})$  is still  a linear combination of $F_{i,\ell_i+km}$ with $k\ge 0$.

We see that  $\psi(\gZ(\wg^\vth))$ is contained in $\gZ_\times$. Furthermore, 
by induction on $j$, each generator $F_{i,d_i^\bullet+jm}\in\gZ_\times$ is contained in $\psi(\gZ(\wg^\vth))$. 
%% each image 
Hence 
$\psi(\gZ(\wg^\vth))=\gZ_\times$. Under our assumptions on $\g_{(0)}$, we have 
$\gZ_\times=\gZ$ by \cite[Cor.\,4.7]{fo}   and \cite[Cor.\,4.8]{some}.
\end{proof}

Theorems~\ref{image1} and \ref{image2} show 
%%% We see 
a certain similarity between $\psi(\gZ(\wg^\vth))$ and $\psi(\gZ(\wg^\vth_-,[0]))$. 
There is a small difference, 
the image of $\gZ(\wg^\vth)$ does not contain $\gS(\g_0)^{\g_0}$, 
while $\gS(\g_0)^{\g_0}$ lies in $\psi(\gZ(\wg^\vth_-,[0]))$.
Since $\{\gZ(\wg^\vth),\g_0\}$ is zero,  this can be remedied by 
extending $\gZ(\wg^\vth)$. 

A more significant difference is %% that 
%%% In that  
%% $\ind\g_{(0)}=\rk\g$ and 
related to 
$\cz_0$. 
%
%%%
On the one hand, there are automorphisms $\vth$ such that $\cz_0\not\subset \psi(\gZ(\wg^\vth_-,[0]))$,
see Example~\ref{E6} below. On the other hand, 
%%% Recall that 
$\cz_0$ is contained in $\gZ(\wg^\vth)$ under a natural identification 
of $\g_{m-1}t\oplus\ldots \oplus\g_1 t^{m-1}\oplus \g_0 t^m \subset\g[t]^\vth$ with $\g$ 
and hence 
$\cz_0\subset\psi(\gZ(\wg))$. 

One can deduce from the proof of \cite[Prop.\,8.3]{fo} that each polarisation $(F_i)_{[k]}$ with $k\le 0$ belongs to 
$\gZ(\wg^\vth)$. Repeating the proof of Theorem~\ref{image2} and using a description of $\gZ$ obtained in 
\cite{fo,some}, we conclude that 
$\psi(\mathsf{alg}\langle\gZ(\wg^\vth),\gS(\g_0)^{\g_0}\rangle)$ contains $\mathsf{alg}\langle\gZ,\cz_0,\gS(\g_0)^{\g_0}\rangle$
for any $\vth$. However, 
%%%  
%%% 
there are cases, where a description of $\psi(\gZ(\wg^\vth))$ is unknown.
Therefore it can happen that the image  of  $\mathsf{alg}\langle\gZ(\wg^\vth),\gS(\g_0)^{\g_0}\rangle$ is even larger.

\begin{ex}\label{E6}
Let $\g$ be simple of type ${\sf E}_6$ and $\vth$ an inner involution with 
$\g_0=\gt{so}_{10}\oplus\gt{so}_2$. Then there is no  {\sf g.g.s.}  for $\vth$ in $\gS(\g)^{\g}$ \cite[Remark\,4.3]{p07}. 
But $\ind\g_{(0)}=6=\rk\g$  \cite[Sect.\,2]{p07}.  
The algebra $\gS(\g)^{\g}$ has $6$ homogeneous generators $F_i$ of degrees $2,5,6,8,9,12$.  Assume that %%Choose the numbering such that  
%% Assume that 
$\deg F_1=2$. %%%% <\deg F_{i+1}$ for each $i<6$. 
The algebra $\gS(\g_0)^{\g_0}$ has $6$ generators $\bh_u$ of degrees $1,2,4,5,6,8$. 
By Theorem~\ref{image1} and \eqref{z-x},  
 $\psi(\gZ(\wg^\vth_-,[0]))$ is generated by $\bh_1,\bh_2,\bh_3,\bh_4,\bh_5,\bh_6$  and 
$F_{i,j}$, where  $1\le i\le 6$ and $0<j$. 

The algebra $\cz_0$ contains $\gS(\g_1)^{\g_0}$, which is a polynomial ring with two generators 
$\bff_1,\bff_2$ such that $\deg\bff_1=2$ and $\deg\bff_2=4$. It is not difficult to see 
that  $F_1=F_{1,0}+F_{1,2}$ and 
that 
$\bff_1=F_{1,2}$ up to a nonzero scalar. 
Since $\bff_2\in\gS(\g_1)$ is not proportional to $\bff_1^2$, 
 $\bff_2\not\in\gZ_\times$ and also 
%% We can say even more, 
$\bff_2\not\in\mathsf{alg}\langle \gZ_\times,\gS(\g_0)^{\g_0}\rangle=\psi(\gZ(\wg^\vth_-,[0]))$
by degree considerations. 
Thus 
$\cz_0\not\subset \psi(\gZ(\wg^\vth_-,[0]))$. 

In this case, $\cz_0$ is not a polynomial ring \cite[Sect.\,6.3]{Y-imrn}. No generating set for $\cz_0$  is known. 
No generating set is known for $\gS(\g_1 t + \g_0 t^2+\g_1t^3+\g_0 t^4)^{\g[t^{-1}]^\vth}\subset \gZ(\wg^\vth)$ 
or any 
$$
\gS\left(\bigoplus_{j=1}^{2n} \g_{\bar j}t^j\right)^{\g[t^{-1}]^\vth} \subset \gZ(\wg^\vth)
$$
either. Hence a description of 
 $\gZ(\wg^\vth)$ and $\psi(\gZ(\wg^\vth))$ remains a mistery.
\end{ex}

\begin{rmk} \label{Rcomp}
By Theorems~\ref{image1} and \ref{image2}, there is a surjective homomorphism ${\psi\!:\g[t,t^{-1}]\to\g}$ such that 
 $\psi(\gZ(\wg^\vth))$ and $\psi(\gZ(\wg^\vth,[0]))$ contain $\gZ_\times(\g,\vth)$.
This means that these images are quite large.  
If $\ind\g_{(0)}=\rk\g$, then $\trdeg\gZ_\times(\g,\vth)=(\dim\g+\rk\g)/2$, which is  the maximal possible value, see  \cite[Sect.\,3]{fo}.  The condition $\ind\g_{(0)}=\rk\g$ holds for many automorphisms, in particular, for 
all involutions, see \cite[Sect.\,2]{p07} and  \cite[Sect.\,3\,\&\,4]{MZ23}. 
 %% ........ {\it hm}. 
 
%%Consider a similar image of $\gZ(\wg)$ or $\gZ(\wg,[0])$. 
Let $\gamma\!:\g[t,t^{-1}]\to \g$ be a   surjective homomorphism and set 
$\gt a=\ker\gamma$. Suppose that $\g$ is simple (non-Abelian).   Then $\gt a\cap\g=\{0\}$,  since $[\g,\g[t,t^{-1}]]=\g[t,t^{-1}]$. 
Hence the restriction of $\gamma$ to $\g$ is an isomorphism. By the construction, 
$\gZ(\wg)$ and $\gZ(\wg,[0])$ consist of $\g$-invariant elements. Thus, their images under $\gamma$ 
are contained in $\gS(\g)^{\g}$. Here $\trdeg \gS(\g)^{\g}=\rk\g$. 
Thus, there is no surjective homomorphism $\gamma\!:\g[t,t^{-1}]\to \g$ such that the transcendence degree of 
$\gamma(\gZ(\wg))$ or $\gamma(\gZ(\wg,[0]))$ is equal to $(\dim\g+\rk\g)/2$. 
This shows that $\gZ(\wg)$ and  $\gZ(\wg,[0])$ are quite different in nature from 
$\gZ(\wg^\vth)$ and $\gZ(\wg^\vth,[0])$. 
\end{rmk}

\end{document}